\newtheorem{thm}{Theorem} 
 \newtheorem{propo}{Proposition}
\newtheorem{coro}{Corollary} \setlength{\parindent}{0cm} \let\paragraph\subsection
\def\Binomial#1#2{{#1\choose #2}}
\title{Gauss-Bonnet for Form Curvatures}
\author{Oliver Knill}
\date{8/28/2024}
\address{Department of Mathematics \\ Harvard University \\ Cambridge, MA, 02138 }
\subjclass{}
\keywords{Gauss Bonnet}
\begin{document}
\maketitle

\begin{abstract}
We look at curvatures that are supported on k-dimensional
parts of a simplicial complex $G$. These curvature all satisfy the 
Gauss-Bonnet theorem, provided that the k-dimensional simplices cover $G$.
Each of these curvatures can be written as an expectation of Poincare-Hopf indices. 
Linear or non-linear wave dynamics with discrete or continuous time 
allow to deform these curvatures while keeping the Gauss-Bonnet property. 
\end{abstract}

\section{Introduction}

\paragraph{}
We start our story with a familiar special case
of a convex polyhedron $G$. If $G$ has $|V|$ vertices, $|E|$ edges and $|F|$ triangular faces,
the {\bf Euler polyhedron formula} $\chi(G)=|V|-|E|+|F|=2$ holds. Discovered empirically first
by Descartes \cite{Aczel}, then proven by Euler in more generality what we call today 
{\bf planar graphs} \cite{Euler1758}. The polyhedral case needed time to stabilize.
\cite{lakatos} made it a case study in the epitemology of sciences. 
The book of \cite{Richeson} illustrates the drama. 
Various combinatorial notions of curvature have been suggested. First maybe
by Victor Eberhard (shown in Figure~\ref{eberhard}) \cite{Eberhard1891}. The curvature
$K(v)=1-d(v)/6$ with vertex degree $d(v)$ was used already 
in graph coloring contexts \cite{Heesch}. If all faces are triangular and no boundary
is present, then the {\bf Gauss-Bonnet formula} $\chi(G) = \sum_{v \in V} K(v)$ hold.
To prove Gauss-Bonnet think about $\omega(x) = (-1)^{{\rm dim}(x)}$ at the "energy"
\footnote{\cite{KnillEnergy2020} shows in full generality that $\chi(G)$ is always the 
sum of potential energies $g(x,y)$, where $g$ is the inverse of the connection Laplacian,
which is always unimodular. In physics, Green functions give potential values.}
of $x \in V \cup E \cup F$. Euler characteristic $\chi(G)=\sum_{x \in G} \omega(x)$
is the total energy. Move the energy from every $x$ equally to all vertices in $x$:
every edges gives $-1/2$ to its vertices, every triangle gives $1/3$ to its vertices. 
$1$ ends up at the vertex. Then $K(v)=1-d(v)/2+d(v)/6=1-d(v)/6$. 
More references are \cite{Gromov87,Presnov1991,Higuchi,NarayanSaniee,princetonguide}.

\paragraph{}
Combinatorial curvatures exist for any simplicial complex and especially all
complexes that come from graphs, where the k-simplices are complete sub-graphs $K_{k+1}$.
The {\bf Levitt curvature} \cite{Levitt1992} on vertices $v \in V=\{ x \in G, |x|=1\}$ is
$$ K(v) = \sum_{x \in G, v \in x} \frac{\omega(x)}{|x|} $$
with $\omega(x) = (-1)^{{\rm dim}(x)}$. It is a vertex curvature 
which works on a general {\bf finite abstract simplicial complex} with Euler characteristic
$\chi(G)=\sum_{x \in G} \omega(x)$. Gauss Bonnet means $\chi(G)=\sum_{v \in V} K(v)$. 
\footnote{Rediscovered in \cite{cherngaussbonnet} 
while investigating curvatures for higher dimensional manifolds.
We found the reference \cite{Levitt1992} in 2015, while working on 
multi-linear valuations \cite{valuation}.}
As already noted in \cite{cherngaussbonnet}, the proof is a simple 
conservation principle in that the energy $\omega(x)$ on $x$ 
is distributed to the $|x|=k+1$ vertices of a $k$-dimensional simplex $x$. 
We generalize this slightly here and sow that 
the energies $\omega(x)$ can also be distributed to $k$-dimensional parts 
$G_k = \{ x \in G, |x|=k+1\}$ of $G$, producing curvatures $K_k$. 
In \cite{cherngaussbonnet} experiments showed $K_0(v) = 0$ for
odd-dimensional manifolds. It was proven using index expectation \cite{indexexpectation}.
Curvatures can be seen as index expectation of {\bf Poincar\'e-Hopf
indices} $i_f$, where $f$ are functions on facets. If $i_f=-i_{-f}$, the sum is zero if
the probability measure invariant under the involution $f \to -f$.

\begin{figure}[!htpb]
\scalebox{0.07}{\includegraphics{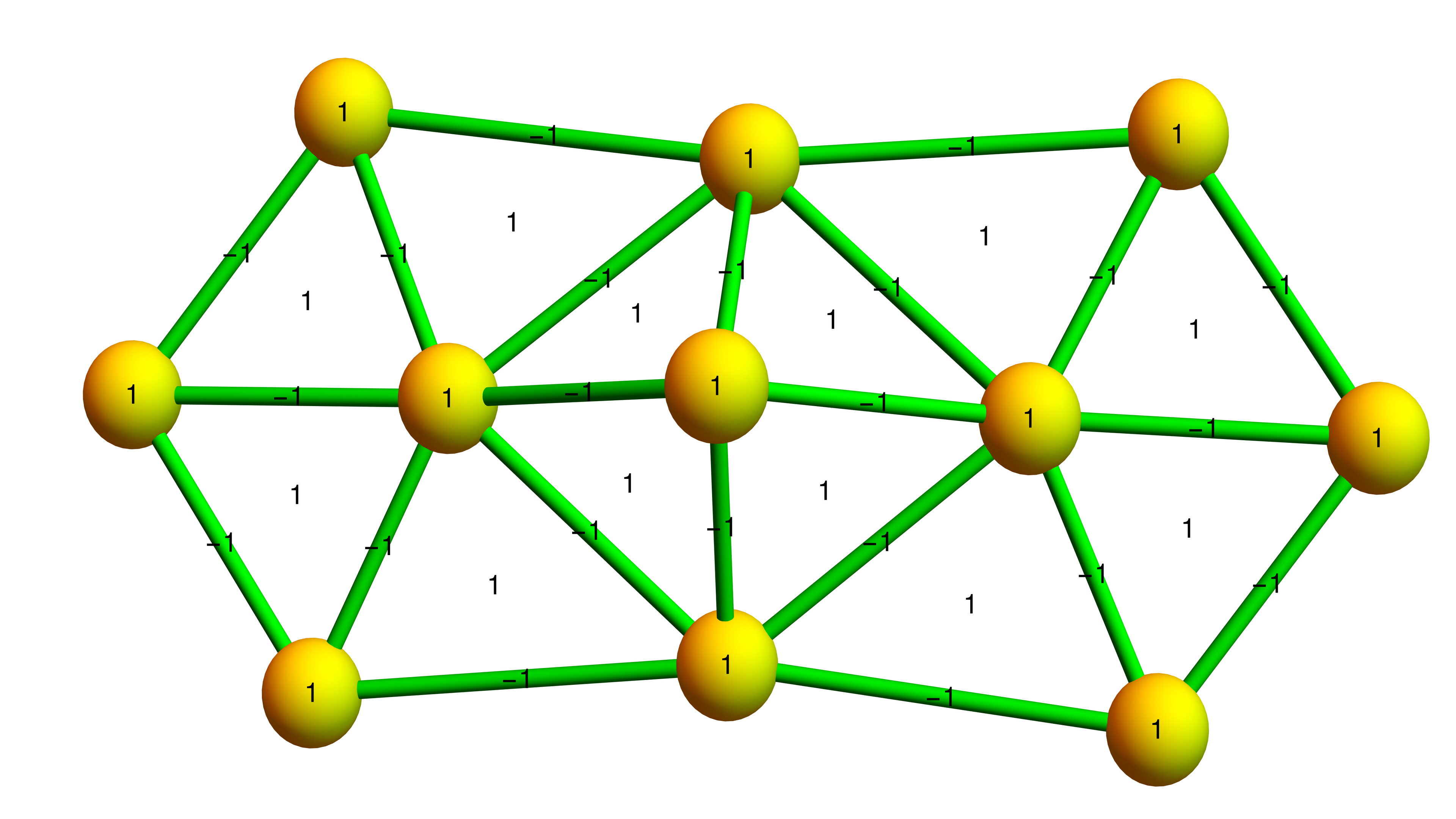}}
\scalebox{0.07}{\includegraphics{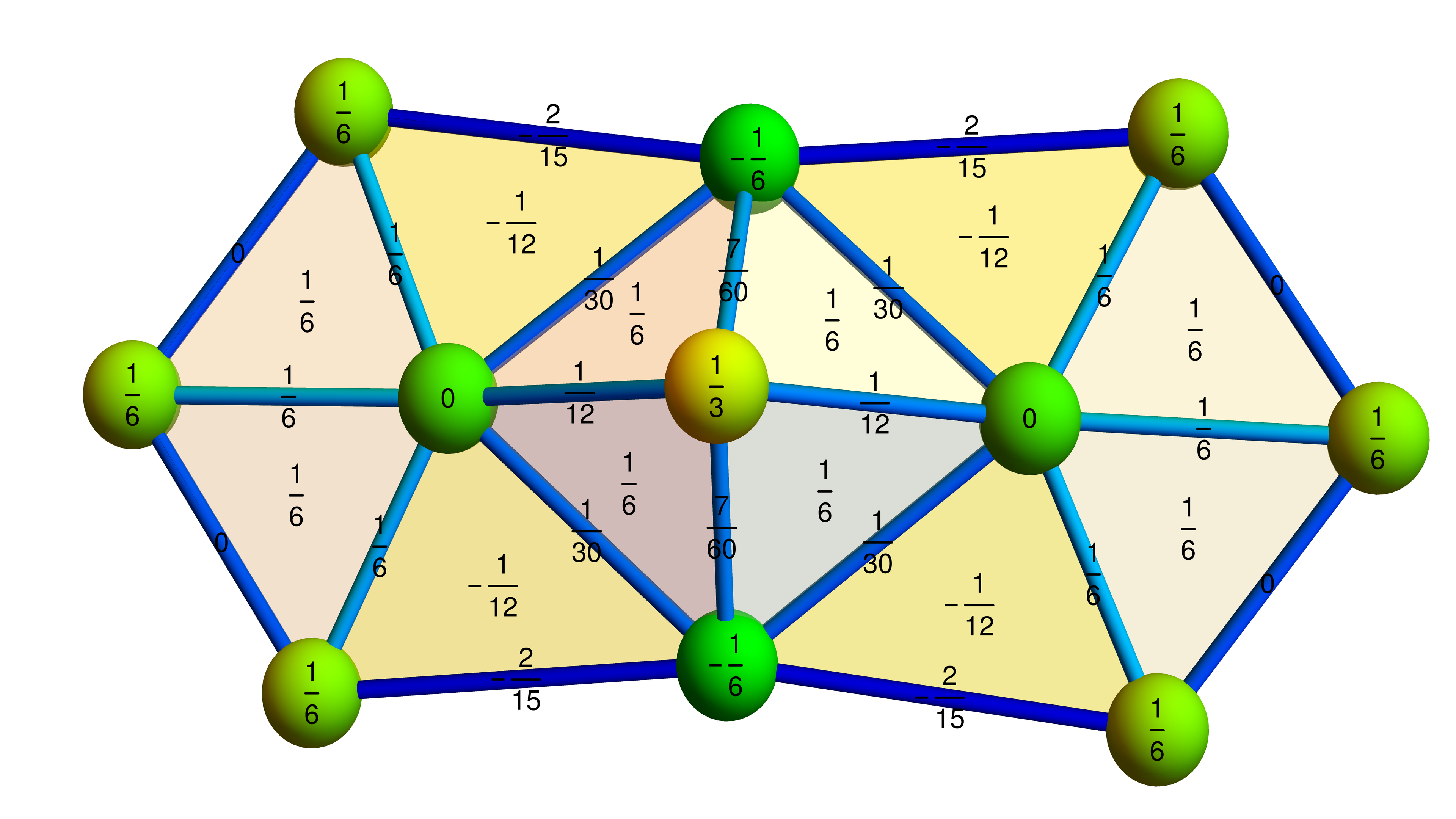}}
\label{kite}
\caption{
We see a two-dimensional region with $11$ vertices $V$, $22$ edges $E$ and $12$ 
faces $F$, where $V-E+F=1$. The original energies $1$ on $V \cup F$ and $-1$ on $E$ can be 
distributed to vertices, edges and faces to get curvatures
$K(v)$, $K(e)$ and $K(f)$ each satisfying Gauss-Bonnet
$\chi(G)=\sum_{v \in V} K(v) = \sum_{e \in E} K(e) = \sum_{f \in F} K(f)$. 
}
\end{figure}

\paragraph{}
If the energies $\omega(x)$ of a 2 dimensional surface without boundary 
are moved to edges $e=(a,b)$, we end up with 
$K(e) = 1/d(a) + 1/d(b) -1 + 2/3 = 1/d(a) + 1/d(b)-1/3$. If the energies 
are moved to the triangles $f=(a,b,c)$, we end up with $K(f) = 1/d(a)+1/d(b)+1/d(c) -3/2 + 1 
= 1/d(a)+1/d(b)+1/d(c)-1/2$. 
For 2-dimensional manifolds with boundary, the edge curvature at the rim is 
$K(v) = 1/d(a) + 1/d(b) - 2/3$
and the face curvature is at the boundary $K(f) = 1/d(a) + 1/d(b) + 1/d(c) -2/3$. 
Beside the familiar Gauss Bonnet $\chi(G)=\sum_{v \in V} K(v)$
for vertex curvatures, there are now new Gauss-Bonnet statements for edge or face curvatures
$\chi(G) = \sum_{e \in E} K(e)$ and $\chi(G)=\sum_{f \in F} K(f)$. 

\paragraph{}
To summarize: in the special case of 2 dimensional manifolds, (graphs for which every unit
sphere $S(v)$ is a circular graph with $d(v) \geq 4$ elements), we have:

\begin{center} 
\begin{tabular}{|l|l|l|}  \hline
       &  in the interior                    & 
          at the boundary  \\ \hline
vertex &  $1-\frac{d(v)}{6}$                             & 
          $\frac{2}{3}-\frac{d(v)}{6}$ \\ \hline
edge   &  $\frac{1}{d(a)} + \frac{1}{d(b)} -\frac{1}{3}$ & 
          $\frac{1}{d(a)} + \frac{1}{d(b)} -\frac{2}{3}$ \\ \hline
face   &  $\frac{1}{d(a)} + \frac{1}{d(b)} +\frac{1}{d(c)} -\frac{1}{2}$ &  
          $\frac{1}{d(a)} + \frac{1}{d(b)} +\frac{1}{d(c)} -\frac{2}{3}$ \\ \hline
\end{tabular}
\end{center}

\begin{figure}[!htpb]
\parbox{13cm}{
\parbox{6cm}{ \scalebox{0.08}{\includegraphics{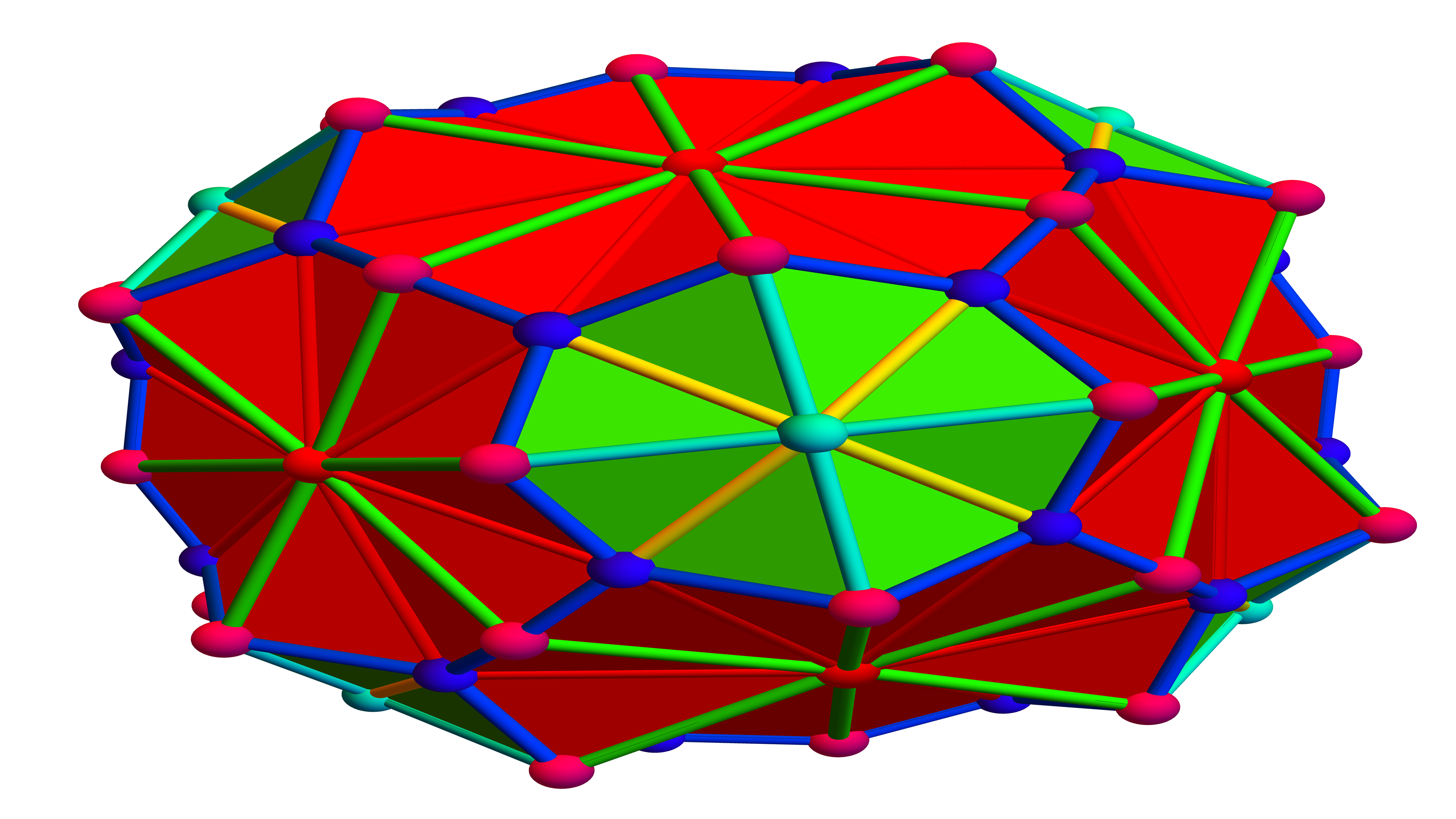}}}
\parbox{6cm}{\scalebox{0.08}{\includegraphics{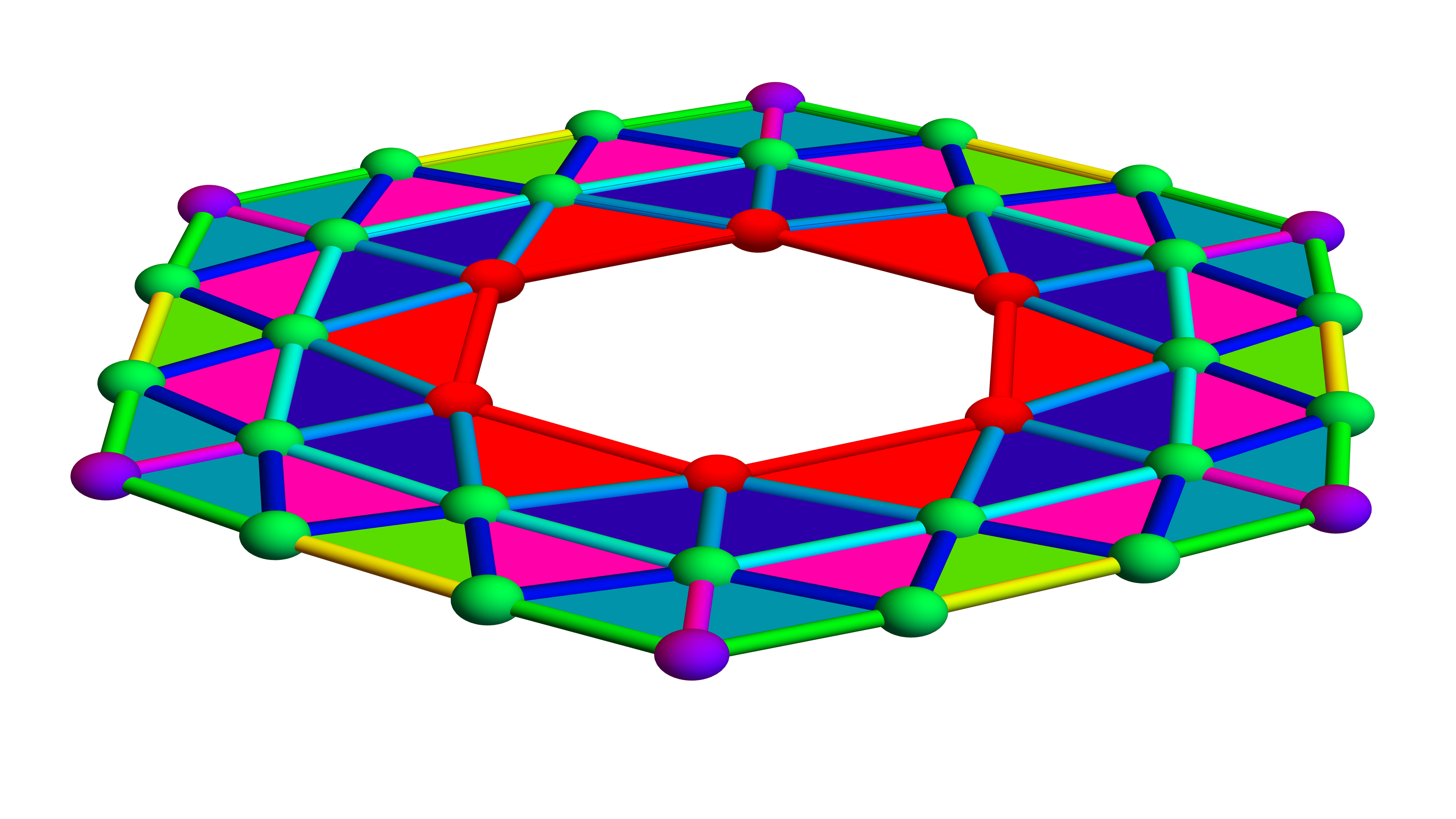}}}
}
\label{cube}
\caption{
The manifold without boundary is the Barycentric refinement of a stellated cube.
To the right, we see an example of a manifold with boundary that is flat in
the interior. Also the edge curvature of interior edges is zero.
}
\end{figure}

\begin{comment}
Import["~/text3/graphgeometry/art-delta/curvatureanimation/base.m"];
s = WheelGraph[7]; 
CurvatureGraphPlot[s] 
{ WaveCurvatures[s, 0, 0], WaveCurvatures[s, 1, 0], WaveCurvatures[s, 2, 0]}
e = EdgeRules[s]; e=Complement[e,{Last[e]}]; s=UndirectedGraph[Graph[e]];
{WaveCurvatures[s, 0, 0], WaveCurvatures[s, 1, 0], WaveCurvatures[s, 2, 0]}
\end{comment} 

\section{The general case}

\paragraph{}
We assume from now on that $G$ is a {\bf finite abstract simplicial complex} meaning
a finite set of non-empty sets closed under the operation of taking non-empty subsets.
In order to introduce curvatures that are $k$-forms, meaning functions on 
$G_k = \{ x \in G, {\rm dim}(x) = |x|-1= k \}$, the 
$k$-dimensional simplices, we need to assume that the finite abstract simplicial 
complex $G=\bigcup_{k=0}^q G_k$ has the property that 
$\bigcup_{x \in G_k} x= V$ with $V=\bigcup_{x \in G} x$.
We say in that case that $\bigcup_{x \in G_k} x$ {\bf covers $G$}. A {\bf pure q-dimensional 
complex} for example, is a complex $\bigcup_{k=0}^q G_k$ for which the {\bf facets} 
$G_q$ cover $G$ meaning that every $x \in G$ is a subset of some facet. 
For $G=\{ \{1\},\{2\},\{3\},\{1,2\} \}$ for example, $G_0$ covers $G$ but $G_1$ does
not cover $G$. The statement {\bf $G_1$ covers $G$} is equivalent to {\bf $G$ is connected}.

\paragraph{}
We define then the {\bf k-form-curvature} for $0 \leq k \leq q$ as
$$  K_k(x) = \sum_{y \subset x} \frac{\omega(y)}{d_k(y)} + \sum_{j>k} (-1)^j \frac{d_j(x)}{\Binomial{j+1}{k+1}} $$
with $d_k(x) = |\{ y \in G_k, y \subset x\}$.
We get so functions $K_k:G_k \to \mathbb{R}$ which can be called a $k$-form,
\footnote{As defined the curvature is a function on $x$, where $x$ is seen as a set. It 
can be extended to ordered sets by setting $K(\pi(x))={\rm sign}(\pi) K(x)$ so
that it becomes a discrete differential form. This does not affect the 
integral $\sum_{x \in G_k} K_k(x)$ of the differential form. This is the point of view
of Whitney who both worked in the continuum and discrete.}
as it is a function on $G_k$, or elements in $l^2(G_k)$, if we want to use an inner product
on such functions. \footnote{$K_k(v)$, the curvature has
nothing to do with the complete graph $K_k$.}

\paragraph{}
In the case $k=0$, the first part $\sum_{y \subset x} \frac{\omega(y)}{d_k(y)}$ is zero. 
We get then the familiar {\bf Levitt curvature}, where $d_j(x)$ counts the number of $j$ dimensional 
simplices attached to the vertex $x$. 
$$  K_0(x) = \sum_{j} (-1)^j \frac{d_j(x)}{j+1}  \; . $$
An other special case is the {\bf facet curvature} for $k=q$. It assumes that the simplical complex is
covered by the highest $q$-dimensional facets. One calls such a simplicial complex {\bf pure}
or {\bf homogeneous} of maximal dimension $q$. Under this assumption, the second part of the form curvature
is zero and we end up with 
$$  K_q(x) = \sum_{y \subset x} \frac{\omega(y)}{d_q(y)} \; , $$
where the {\bf facet degree} $d_q(y)$ of a simplex $y$ counts the number of $q$ dimensional simplices 
which contain $y$. 

\paragraph{}
We have for every $k \geq 0$ the Gauss-Bonnet formula:

\begin{thm}[Gauss-Bonnet] If $\bigcup_{x \in G_k} x$ covers $G$, then
$\sum_{x \in G_k} K_k(x)=\chi(G)$. \end{thm}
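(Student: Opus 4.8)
The plan is to prove this by a conservation-of-energy (double counting) argument, exactly the mechanism sketched in the introduction for the Levitt curvature, now carried out one dimension at a time. Start from the tautology $\chi(G) = \sum_{y \in G} \omega(y)$, thinking of $\omega(y) = (-1)^{\dim(y)}$ as a unit of energy sitting on each simplex $y$. I will define a transport rule that moves all of this energy onto the $k$-simplices $G_k$: a simplex $y$ with $\dim(y) \ge k$ ships its energy $\omega(y)$ in equal shares to each of its $k$-dimensional faces, of which there are $\binom{\dim(y)+1}{k+1}$; a simplex $y$ with $\dim(y) < k$ ships its energy in equal shares to each of the $d_k(y)$ simplices of $G_k$ that contain it. Provided every such lower-dimensional $y$ does sit inside at least one $k$-simplex, this transport is mass preserving, so the total energy accumulated on $G_k$ after the transport still equals $\chi(G)$.

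The first computational step is to check that the energy accumulated on a fixed $x \in G_k$ under this rule is precisely $K_k(x)$. I would split the incoming contributions into three groups. First, proper faces $y \subsetneq x$ (necessarily of dimension $< k$) each send $x$ the share $\omega(y)/d_k(y)$, since $x$ is one of the $d_k(y)$ cofaces of $y$ in $G_k$. Second, $x$ itself keeps its own energy $\omega(x)$, which equals $\omega(x)/d_k(x)$ because $d_k(x) = 1$; together with the first group this assembles the term $\sum_{y \subset x} \omega(y)/d_k(y)$. Third, for each $j > k$ and each coface $z \supsetneq x$ with $\dim(z) = j$, the simplex $z$ sends $x$ the share $\omega(z)/\binom{j+1}{k+1} = (-1)^j/\binom{j+1}{k+1}$; there are exactly $d_j(x)$ such $z$, so summing over $j$ produces $\sum_{j > k} (-1)^j d_j(x)/\binom{j+1}{k+1}$. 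Adding the three groups yields exactly the defining formula for $K_k(x)$. The only auxiliary fact used here is the elementary identity that a $j$-simplex has $\binom{j+1}{k+1}$ faces of dimension $k$, one per $(k+1)$-element subset of its $(j+1)$ vertices.

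The second step is the conservation tally: summing the transported energy over all $x \in G_k$ must recover $\sum_{y \in G} \omega(y)$. For $y$ with $\dim(y) \ge k$ this is immediate, the shares $\omega(y)/\binom{\dim(y)+1}{k+1}$ over the $\binom{\dim(y)+1}{k+1}$ $k$-faces of $y$ summing to $\omega(y)$; for $y$ with $\dim(y) < k$ the shares $\omega(y)/d_k(y)$ over the $d_k(y)$ cofaces of $y$ in $G_k$ also sum to $\omega(y)$, but only when $d_k(y) \ge 1$. This is precisely where the hypothesis that $\bigcup_{x \in G_k} x$ covers $G$ enters: it is the requirement that no simplex of dimension below $k$ is stranded without a $k$-dimensional coface into which to discharge its energy, so that nothing leaks out of the bookkeeping. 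I expect this to be the only genuinely delicate point — one must read the covering condition so that it really forces $d_k(y) \ge 1$ for every $y \in G$ with $\dim(y) < k$ (for $k = q$ on a pure complex, and for $k = 0$ vacuously, this is automatic). Granting that, chaining the two steps gives $\sum_{x \in G_k} K_k(x) = \sum_{y \in G} \omega(y) = \chi(G)$.
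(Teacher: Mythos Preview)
Your proof is correct and follows exactly the same energy-redistribution (double counting) argument as the paper, only written out in considerably more detail; the paper's own proof is a one-paragraph sketch of the same two transactions you describe. Your flag about the covering hypothesis is well taken: as literally stated in the paper (vertices covered by $G_k$), it does not obviously force $d_k(y)\ge 1$ for every lower-dimensional $y$, and the paper glosses over this point just as you suspected.
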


\begin{proof}
The proof is almost tautological if one thinks in terms of energies $\omega(y)$ attached to 
the simplices $y \in G$. Since $G_k$ covers $G$, the distribution of energies does take care
of all energies. There are two type of transactions. Either $y \subset x$, then the energy $\omega(y)$
is split into $d_k(y)$ simplices of dimension $k$. Or then the later case $x \subset y$, where we can
count how many times this happens. We have $\Binomial{j+1}{k+1}$ simplices of dimension $k$ in a simplex
of dimension $j$. 
\end{proof} 

\paragraph{}
{\bf Examples:} We have already seen the cases for $2$-manifolds, where $\chi(G)=|V|-|E|+|F|=|G_0|-|G_1|+|G_2$, it becomes
$K_0(a) = 1-d(a)/6$, $K( (a,b) )=1/d(a)+1/d(b)-1/3$ and $K( (a,b,c) ) = 1/d(a)+1/d(b)+1/d(c)-1/2$.
For $3$-manifolds, it simplifies to $K_0(a)=1-d(a)/2+d_2(a)/3-d_3(a)/4$,
$K_1((a,b)) = 1/d_1(a)+1/d_1(b)-1+d_2(a,b)/12$ and
is $K_2((a,b,c)) = 1/d_2(a) +1/d_2(b)+1/d_2(c)-1/d_2(a,b)-1/d_2(b,c)-1/d_2(a,c)+1/2$
is $K_3((a,b,c,d)) = \sum_{y \subset \{a,b,c,d\}} \omega(y)/d_3(y)$.  We will come back to this in the 
3-manifold section. 

\paragraph{}
If $G$ has $n$ elements, there is a $n \times n$ matrix $D = d+d^*$ which encodes the
exterior derivatives of the complex. 
\footnote{We called it the Dirac operator \cite{KnillILAS} of the graph at a time, when 
most graph theorists would see "graphs" as {\bf one-dimensional simplicial complexes}.}
The square $L=\oplus_{k=1}^q L_k$ is a block diagonal matrix and the kernel of
$L_k: l^2(G_k) \to l^2(G_k)$ is the space of {\bf harmonic k-forms}. 
The numbers $b_k = {\rm dim}({\rm ker}(L_k)$ are called
the {\bf Betti numbers}.  The {\bf Euler characteristic} of $G$ is $\sum_{x \in G} \omega(x)$ with 
$\omega(x) = (-1)^{{\rm dim}(x)}$. If we define the {\bf super trace} for an operator on $l^2(G)$ as
${\rm str}(A) = \sum_{k}  A_{kk} \omega_k$, then ${\rm str}(1)= \chi(G)$. The McKean-Singer symmetry
\cite{McKeanSinger,knillmckeansinger}
tells ${\rm str}(L^k)=0$ for $k \geq 0$ so that ${\rm str}(e^{-t L})= \chi(G)$. 
Since for $t \to \infty$ this gives $\sum_{k=0}^d (-1)^k b_k = \chi(G)$, 
the Euler-Poincar\'e formula follows. 

\section{3-manifolds}

\paragraph{}
A finite simple graph is a {\bf 3-manifold} if every unit sphere $S(v)$ is a 
{\bf 2-sphere}, meaning a connected 2-dimensional manifold of Euler 
characteristic $2$. A {\bf 2-manifold} is a finite simple graph for which every 
unit sphere is a cyclic graph of length $4$ or more. 
2-spheres can also be characterized as
2-manifolds $G$ which after removing a vertex become contractible, a recursive notion
which means that there is a vertex $v$ such that both $G-v$ and $S(v)$ are contractible.
\footnote{See \cite{LusternikSchnirelmann2024} for a recent more expository document}.
3-manifolds can be given as graphs. Libraries like \cite{Manifoldpage} give 
them by listing the facets $G_3$, the highest dimensional simplices. The 
simplicial complex $G$ generated by them then defines the graph with vertices
$V=G_0$ and edges $E=G_1$. 

\paragraph{}
In dimension $3$, the curvature $K(v)=K_0(v)$ is given by 
$$  K(v)=1-\frac{V(v)}{2}+\frac{E(v)}{3} - \frac{F(v)}{4} \; , $$
where $V(v),E(v),F(v)$ are the number of vertices, edges and faces in the
unit sphere $S(v)$ of a vertex $v$. From the Euler relation $V-E+F=2$  
and the {\bf Dehn-Sommerville relation} $2E=3F$, (2 tetrahedra share exactly 
one triangle), we easily get $K(v)=0$. 
In \cite{cherngaussbonnet}, we used the Dehn-Sommerville relations to get 
simplified relations in higher dimensions like $K(v)= 1-E(v)/6+F(v)/10$ for 
$4$-manifolds and $K(v)=-E(v)/6+F(v)/4-C(v)/6$ for $5$-manifolds, where 
$C(v)$ is the number of $3$-dimensional {\bf chambers} in the unit sphere $S(v)$. 
We had at that time not been aware of the second Dehn-Sommerville relation 
for 5-manifolds, which is equivalent to $K(v)=0$ for $5$-manifolds.
% v=2+e-f; f=2e/3; Simplify[1-v/2+e/3-f/4]

\paragraph{}
We looked in \cite{cherngaussbonnet} already at various examples like the 16-cell
$K_{2,2,2,2}$ which is the suspension of the octahedron $K_{2,2,2}$.
The 600-cell is a 3-sphere for which all unit spheres are icosahedra.
Or then an octahedral tesselation of the 3-torus.  All the 5-manifold examples
we have seen then displayed $K(v)=-E(v)/6+F(v)/4-C(v)/6=0$. The solution is 
given most elegantly in \cite{dehnsommervillegaussbonnet}: the curvature 
is a Dehn-Sommerville invariant for the unit sphere. Dehn-Sommerville graphs
are graphs for which the Euler characteristic matches the sphere case and for
which all unit spheres are Dehn-Sommerville. It is a very natural class of 
graphs which contains d-spheres but does not refer to contractibility.

\paragraph{}
The edge curvature of a 3-manifold is for an edge $x=(a,b)$ given by 
$$  K_1(x) = \frac{1}{d(a)} + \frac{1}{d(b)} + \sum_{j=1,2,3} (-1)^j \frac{d_j(x)}{\Binomial{j+1}{2}}  $$
which is $K_1(x) = \frac{1}{d(a)} + \frac{1}{d(b)} -1 + \frac{d_2(x)}{3} - \frac{d_3(x)}{6}$,
where $d_2(x) = |S(a) \cap S(b)|$ is the number of triangles containing $(a,b)$
and $d_3(x) = |S(a) \cap S(b)|$ is the number of chambers (=tetrahedra) containing $(a,b)$. So, 
for a 3-manifold, we have 

\begin{propo}[Edge curvature for a 3-manifold]
For a 3-manifold, we have the edge curvature
$$K_1( (a,b) ) =  \frac{1}{d(a)} + \frac{1}{d(b)} + \frac{d(a,b)}{6} - 1 $$
where $d(a),d(b)$ are vertex degrees and $d(a,b)$ is the edge degree.
\end{propo}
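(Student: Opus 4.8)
The plan is to reduce the statement to a single counting identity about the link of an edge. First I would record the specialization of the general $k$-form curvature to $k=1$ and $q=3$, which is exactly the formula displayed just before the proposition:
$$ K_1((a,b)) = \frac{1}{d(a)} + \frac{1}{d(b)} - 1 + \frac{d_2(x)}{3} - \frac{d_3(x)}{6}, $$
with $x=(a,b)$, $d_2(x)$ the number of triangles containing $x$, and $d_3(x)$ the number of tetrahedra containing $x$; here the $-1$ together with $1/d(a)+1/d(b)$ is the contribution of the first sum $\sum_{y\subset x}\omega(y)/d_1(y)$, and $d_2(x)/3 - d_3(x)/6$ comes from the $j=2,3$ terms of the second sum after evaluating $\binom{3}{2}=3$ and $\binom{4}{2}=6$. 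So the whole proposition amounts to proving
$$ \frac{d_2(x)}{3} - \frac{d_3(x)}{6} = \frac{d(a,b)}{6}, \qquad\text{equivalently}\qquad 2\,d_2(x) = d_3(x) + d(a,b). $$

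The key geometric input is that in a $3$-manifold the link of an edge is a circle. Fix $x=(a,b)$. Since $G$ is a $3$-manifold, $S(b)$ is a $2$-sphere, hence a $2$-manifold, so the unit sphere of the vertex $a$ taken inside $S(b)$ is a cyclic graph; but that unit sphere equals $S(a)\cap S(b)$, the edge link $S(x)$. Writing $S(x)=C_m$, I then count three things: $d_2(x)$, the number of triangles $(a,b,c)$ through $x$, is the number of vertices $c\in S(a)\cap S(b)$, which is $m$; $d_3(x)$, the number of tetrahedra $(a,b,c,d)$ through $x$, is the number of edges $(c,d)$ of $S(a)\cap S(b)$, which is again $m$ because a cycle $C_m$ has equally many edges and vertices; and the edge degree $d(a,b)=|S(x)|=m$ by definition. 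Substituting $d_2(x)=d_3(x)=d(a,b)=m$ into the identity above reduces it to the arithmetic $\tfrac{m}{3}-\tfrac{m}{6}=\tfrac{m}{6}$, which finishes the proof.

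I do not expect a genuine obstacle here; the one step that deserves a careful sentence is the identification $S(a)\cap S(b)=C_m$, which rests on two facts already available: a $2$-sphere is a $2$-manifold and in a $2$-manifold every vertex link is a cyclic graph, and taking links is transitive, $S_{S(b)}(a)=S(a)\cap S(b)$. I would also make sure that the paper's ``edge degree'' $d(a,b)$ is the quantity $|S(x)|$ (equivalently the number of triangles on the edge, equivalently the number of chambers on the edge, all equal here), since it is precisely this coincidence of the three edge-incidence counts that makes the curvature collapse to the clean expression in the statement.
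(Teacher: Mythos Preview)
Your proposal is correct and follows the same route as the paper: the paper states, just before the proposition, that $K_1(x)=\frac{1}{d(a)}+\frac{1}{d(b)}-1+\frac{d_2(x)}{3}-\frac{d_3(x)}{6}$ and observes that in a $3$-manifold both $d_2(x)$ and $d_3(x)$ equal $|S(a)\cap S(b)|=d(a,b)$, which is exactly your identification via the cyclic link $S(a)\cap S(b)=C_m$. You supply more detail than the paper does (in particular the explicit reason why the edge link is a cycle), but the argument is the same.
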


\begin{figure}[!htpb]
\scalebox{0.55}{\includegraphics{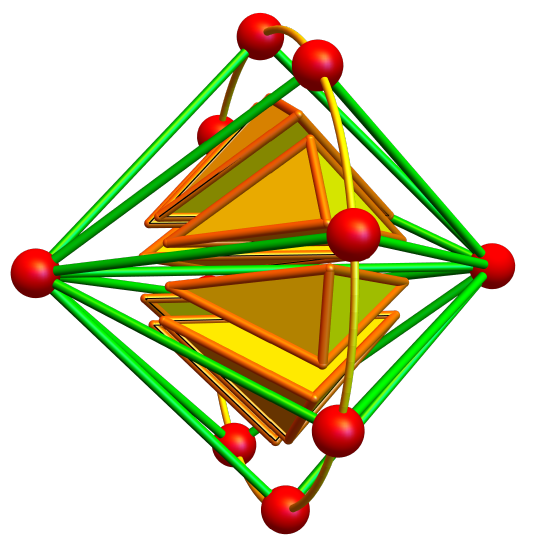}}
\label{edge degree}
\caption{
The edge degree of an edge $x=(a,b)$ in a 3-manifold is the number of 
tetrahedra hinging at $(a,b)$. We have now vertex degrees $d(a)=|S(a)|$
and $d(b) =|S(b)|$ and an edge degree $d(a,b)  = |S(a) \cap S(b)|$. 
}
\end{figure}

\paragraph{}
Examples: \\
a) For the smallest 3-manifold $K_{2,2,2,2}$, which is also known as the {\bf 16-cell},
with $f$-vector $(8,24,32,16)$, we have $d(a)=d(b)=6$ and $d(a,b)=4$ for all edges 
$(a,b)$. The edge curvature is $1/6+1/6+4/6-1$ which is constant $0$.  \\
b) For the 600-cell, also one of the Platonic 4-polytopes (3-spheres embedded 
in 4 dimensions), the $f$-vector is $(120, 720, 1200, 600)$. Every unit sphere $S(v)$ is 
an icosahedron with $|S(v)|=12$ so that $d(v)$ is constant $12$. The edge degree of 
one of the 720 edges is $|S(a) \cap S(b)| = 5$. Now 
$\frac{1}{12} + \frac{1}{12} + \frac{5}{6} - 1 = 0$. 

\paragraph{}
The {\bf face curvature} of a face $x=(a,b,c)$ in a 3-manifold is given by 
$$   K_2(x) =  \frac{1}{d_2(a)} + \frac{1}{d_2(b)} +  \frac{1}{d_2(c)} 
             - \frac{1}{d_2(a,b)} - \frac{1}{d_2(b,c)} - \frac{1}{d_2(a,c)} + 1-2/4  $$
where $d_2(a)$ is the number of triangles which contain $a$ and $d_2(a,b)$ the number of
triangles which contain $(a,b)$. We called this the edge degree $d(a,b)$ before. 
The last part is because there are exactly two tetrahedra attached to each face and each tetrahedron
shares with 4 faces.
% $$  K_k(x) = \sum_{y \subset x} \frac{\omega(y)}{d_k(y)} + \sum_{j>k} (-1)^j \frac{d_j(x)}{\Binomial{j+1}{k+1}} $$

\begin{propo}[Face curvature for a 3-manifold]
For a 3-manifold, we have
$$K_2( (a,b,c) ) =   \frac{1}{d_2(a)} + \frac{1}{d_2(b)} +  \frac{1}{d_2(c)} 
                   - \frac{1}{d_2(a,b)} - \frac{1}{d_2(b,c)} - \frac{1}{d_2(a,c)} +\frac{1}{2}  \; , $$
where $d_2(a)$ counts the number of triangles containing $a$ and $d_2(a,b)=d(a,b)$ is the edge degree.
\end{propo}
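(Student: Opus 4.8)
The plan is to specialize the general definition of the $k$-form curvature to $k=2$ and simply read off the two pieces. Since a $3$-manifold is a pure simplicial complex of dimension $q=3$, its $2$-simplices cover $G$ and the defining formula
$$ K_2(x) = \sum_{y \subset x} \frac{\omega(y)}{d_2(y)} + \sum_{j>2} (-1)^j \frac{d_j(x)}{\Binomial{j+1}{3}} $$
is valid for every triangle $x=(a,b,c)$, where $d_2(w)$ denotes the number of triangles containing the face $w$ and $d_j(w)$ the number of $j$-simplices containing $w$. Because $q=3$, the second sum reduces to the single term $j=3$, and $\Binomial{4}{3}=4$ is the number of triangular faces of a tetrahedron.

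First I would expand the first sum over the seven non-empty subsets $y \subset \{a,b,c\}$, grouped by dimension. The three vertices $\{a\},\{b\},\{c\}$ carry $\omega=+1$ and contribute $\frac{1}{d_2(a)}+\frac{1}{d_2(b)}+\frac{1}{d_2(c)}$. The three edges $\{a,b\},\{b,c\},\{a,c\}$ carry $\omega=-1$ and contribute $-\frac{1}{d_2(a,b)}-\frac{1}{d_2(b,c)}-\frac{1}{d_2(a,c)}$, where $d_2(a,b)$ is the edge degree $d(a,b)$. The triangle $x$ itself carries $\omega=+1$ and, since the only $2$-simplex containing $x$ is $x$, contributes $+1$.

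Next I would evaluate the remaining term $(-1)^3 d_3((a,b,c))/\Binomial{4}{3}$. Here the only substantive input is the manifold fact that every triangle of a $3$-manifold lies in exactly two tetrahedra, i.e. $d_3((a,b,c))=2$; this is the three-dimensional analogue of the ``two tetrahedra share a triangle'' remark used for the edge curvature, and it holds because within the $2$-manifold $S(a)$ the edge $(b,c)$ lies in exactly two triangles, equivalently $S(a)\cap S(b)\cap S(c)$ consists of two vertices. Thus this term equals $-\tfrac{2}{4}=-\tfrac{1}{2}$. Adding it to the $+1$ coming from the triangle's own energy yields $+\tfrac{1}{2}$, and combining with the vertex and edge contributions gives exactly the asserted identity. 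There is essentially no obstacle beyond keeping the bookkeeping straight; if anything needs care, it is fixing the convention that $d_2$ and $d_3$ count simplices containing the argument (so that $d_2(x)=1$ for the top face $x$ itself), which I would state at the outset.
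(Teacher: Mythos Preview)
Your proposal is correct and matches the paper's own derivation essentially line for line: expand the first sum of the $K_2$-formula over the seven subsets of $(a,b,c)$, then use that in a $3$-manifold each triangle lies in exactly two tetrahedra so the $j=3$ term is $-2/4$, giving the constant $1-2/4=1/2$. Your justification of $d_3((a,b,c))=2$ via the unit sphere is in fact more explicit than what the paper writes.
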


\paragraph{}
If we go back to the two Platonic examples. \\
a) For the 16 cell we have exactly 12 triangles that contain a vertex (as the unit sphere
is an octahedron with 12 edges). We have exactly 4 triangles that contain an edge $(a,b)$. 
This is the edge degree $|S(a) \cap S(b)|$. The face curvature is now
$K_2((a,b,c)) = 1/12+1/12+1/12-1/4-1/4-1/4+1/2 =0$. The face curvature is constant $0$. \\
b) For the 600 cell, where the unit sphere is an icosahedron we have
$d_2(a) = 30$ as there are 30 edges in an icosahedron. We also have $d_2(a,b) = 5$. Therefore
$K_2((a,b,c)) = 1/30+1/30 +1/30-1/5-1/5-1/5+1/2 = 0$. \\ 

\paragraph{}
Lets look at the facet curvature
$$  K_3(x) = \sum_{y \subset x} \frac{\omega(y)}{d_q(y)} \; , $$
of a 3-manifold. We could also call it the {\bf chamber curvature} of a chamber $x=(a,b,c,d)$. 
We can split it up into 
$$ \sum_{y \in \{(a),(b),(c),(d)\}} \frac{1}{d_3(y)}
 - \sum_{y \in \{(a,b),(a,c),(a,d),(b,c),(b,d),(c,d)} \frac{1}{d_3(y)} 
 + \sum_{y \in \{(b,c,d),(a,c,d),(a,b,d),(b,c,d)} \frac{1}{d_3(y)} -1 $$
where $d_3(y)$ is the number of tetrahedra shared with the simplex $y$. 
For a vertex $v$, we have $d_3(v)=F(v)$, the number of faces in $S(v)$.
For an edge $e=(a,b)$ we have $d_3((a,b)) =d(a,b)$ the edge degree
and for a face $f=(a,b,c)$ we have $d_3(a,b,c)=2$. This can be written as follows:

\begin{propo}[Facet curvature for a 3-manifold]
For a 3-manifold, we have for a chamber $x=(a,b,c,d)$
$$K_3(x) =  \frac{1}{F(a)} + \frac{1}{F(b)} +  \frac{1}{F(c)} + \frac{1}{F(d)} 
                   - \frac{1}{d(a,b)} - \frac{1}{d(a,c)} - \frac{1}{d(a,d)} - \frac{1}{d(b,c)} - \frac{1}{d(b,d)} - \frac{1}{d(c,d)} 
                   + 1 \; , $$
where $d_3(a)=F(a)$ is the face degree of a vertex and $d(a,b)$ is the degree of an edge. 
\end{propo}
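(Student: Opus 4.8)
The plan is to recognize the statement as the $k=q=3$ instance of the general $k$-form-curvature, where the second sum $\sum_{j>k}(-1)^j d_j(x)/\Binomial{j+1}{k+1}$ is empty because a $3$-manifold carries no simplices of dimension exceeding $3$. Hence one only has to unfold
\[
  K_3(x) = \sum_{y\subset x}\frac{\omega(y)}{d_3(y)}
\]
over the $15$ non-empty subsets $y$ of the chamber $x=(a,b,c,d)$: the $4$ vertices (each with $\omega=+1$), the $6$ edges (each with $\omega=-1$), the $4$ triangular faces (each with $\omega=+1$), and $x$ itself (with $\omega=-1$). The whole content of the proposition is therefore the evaluation of the facet degree $d_3(y)$, the number of tetrahedra of $G$ containing $y$, on each of these four strata.

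First I would dispose of the easy strata. For $y=x$ we clearly have $d_3(x)=1$. For a face $y=(a,b,c)$, the link of $y$ in a $3$-manifold is a $0$-sphere, i.e. a pair of vertices, because the link of a vertex inside a cyclic graph (the link of the edge $(a,b)$) is two points; equivalently, "two tetrahedra share exactly one triangle'', so $d_3(a,b,c)=2$. For a vertex $y=v$, the tetrahedra of $G$ containing $v$ are in bijection with the triangular faces of the unit sphere $S(v)$, which by definition is a $2$-sphere; hence $d_3(v)=F(v)$.

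The one step that needs a short argument is $d_3(e)=d(a,b)$ for an edge $e=(a,b)$. Here I would use the fact, inherited from the definition of a $3$-manifold, that the link of an edge $(a,b)$, which is the unit sphere of $b$ inside the $2$-sphere $S(a)$, hence the unit sphere of a vertex in a $2$-manifold, is a cyclic graph. A cycle has as many vertices as edges, and its vertices index the triangles of $G$ through $(a,b)$ while its edges index the tetrahedra of $G$ through $(a,b)$; therefore the number of chambers at $(a,b)$ equals the number of triangles at $(a,b)$, which is $|S(a)\cap S(b)|=d(a,b)$, the edge degree. This is the only place where the "manifold'' hypothesis does real work, and I expect it to be the main (mild) obstacle, since it requires being careful that links of edges and faces are again spheres of the correct dimension.

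Finally I would substitute these values back,
\[
  K_3(x)=\Bigl(\tfrac1{F(a)}+\tfrac1{F(b)}+\tfrac1{F(c)}+\tfrac1{F(d)}\Bigr)
        -\Bigl(\tfrac1{d(a,b)}+\tfrac1{d(a,c)}+\tfrac1{d(a,d)}+\tfrac1{d(b,c)}+\tfrac1{d(b,d)}+\tfrac1{d(c,d)}\Bigr)
        +4\cdot\tfrac12-1 ,
\]
and collect the constant $4\cdot\tfrac12-1=1$ to obtain the claimed formula. As a consistency check, summing $K_3$ over all chambers must return $\chi(G)$ by the Gauss-Bonnet theorem already proved (the facets $G_3$ cover a $3$-manifold), and for the $16$-cell and the $600$-cell the relevant degrees are constant, which forces $K_3\equiv 0$, matching the worked examples.
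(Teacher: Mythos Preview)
Your proposal is correct and follows essentially the same route as the paper: start from the facet-curvature formula $K_3(x)=\sum_{y\subset x}\omega(y)/d_3(y)$, split the sum over the four strata of subsets of the chamber, and evaluate $d_3$ on each stratum as $d_3(v)=F(v)$, $d_3(a,b)=d(a,b)$, $d_3(a,b,c)=2$, $d_3(x)=1$. If anything, your link-based justification that $d_3(a,b)=d(a,b)$ (cycle has equally many vertices and edges) is slightly more explicit than what the paper writes, which simply asserts this identification.
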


\paragraph{}
Again, let us look at the Platonic examples. \\
a) For the 16 cell we have $F(v)=8$ for all vertices $v$ and 
$|S(a) \cap S(b)|=4$ for all edges $e=(a,b)$.  Then, there are 
Therefore $K_3((a,b,c,d)) = 4/8-6/4 + 1 =0$.  The face curvature is also constant $0$. \\
b) For the 600 cell, where the unit sphere is an icosahedron we have
$F(a) = 20$ as there are 20 triangles in an icosahedron. We also have $d_3(a,b)=d(a,b) = 5$. 
Therefore $K_3((a,b,c,d)) = 4/20-6/5+1 = 0$. \\ 

\begin{figure}[!htpb]
\scalebox{0.08}{\includegraphics{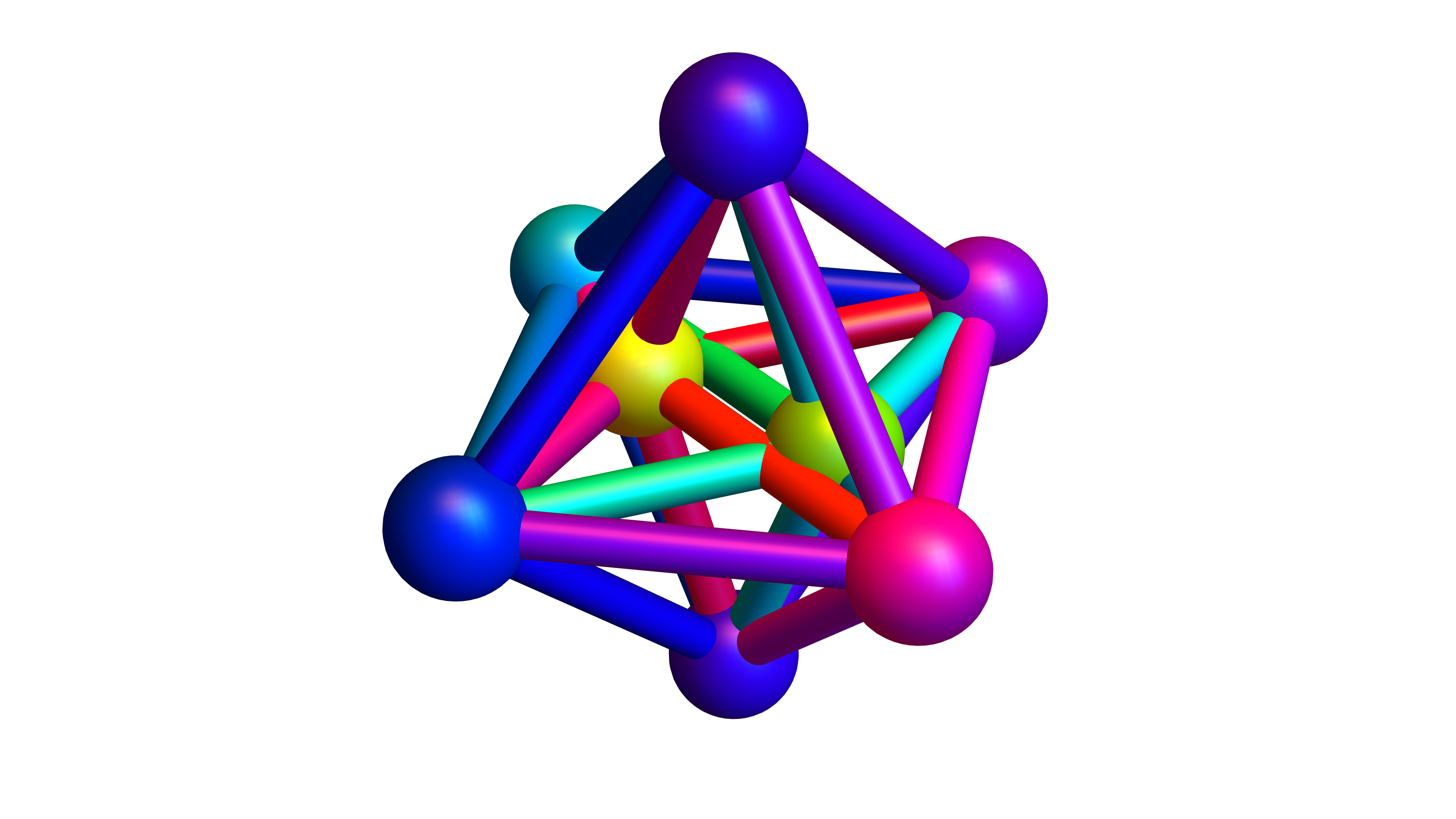}}
\scalebox{0.08}{\includegraphics{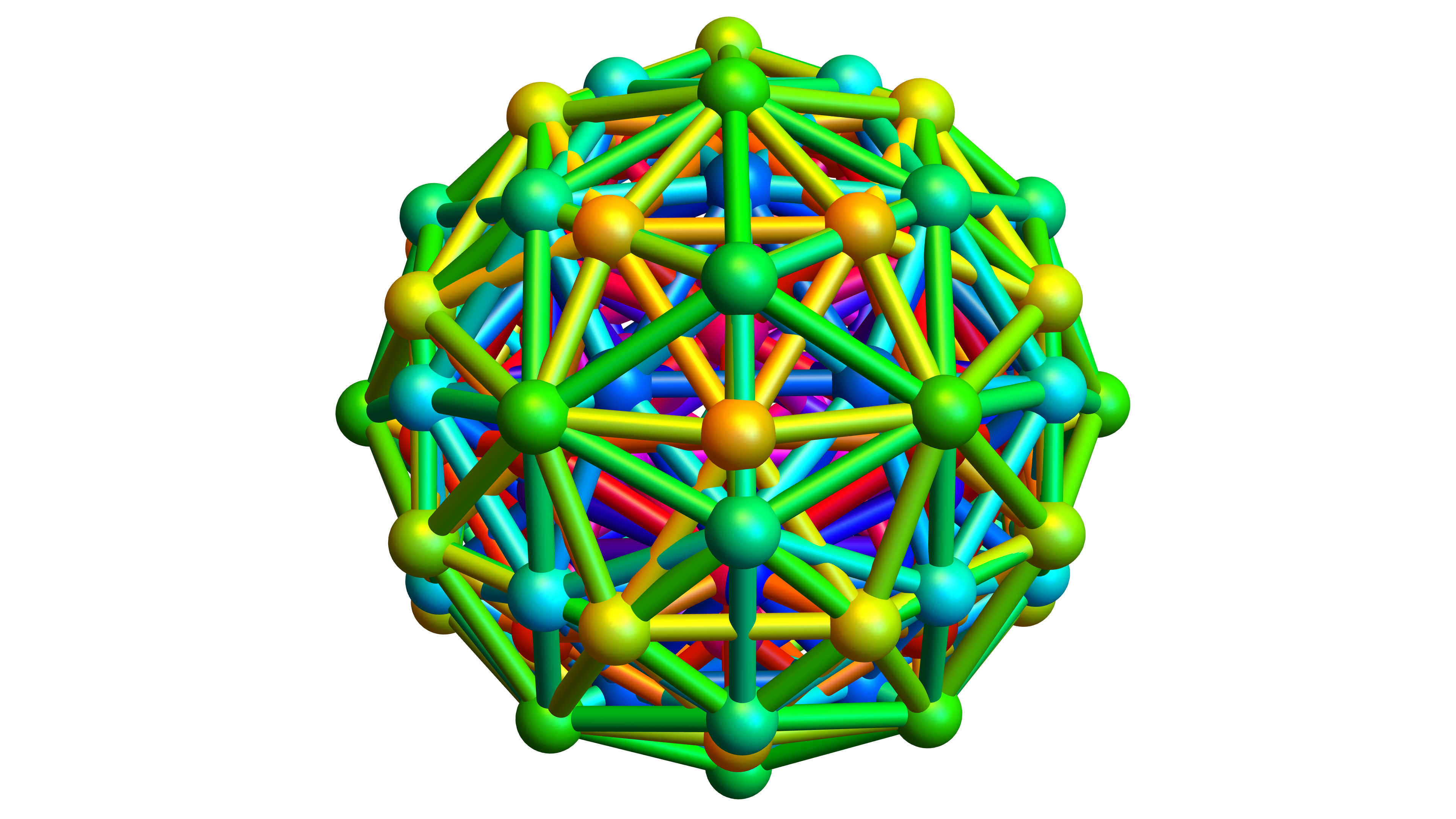}}
\label{Platonic}
\caption{
The 16 cell $K_{2,2,2,2}$ and the 600 cells are the only Platonic
3-spheres.  All the curvatures $K_k(x)$ are constant zero everywhere. 
}
\end{figure}

\paragraph{}
We should note that the other four Platonic 3-spheres (regular 4-polytopes)
are not triangulations of a 3-sphere. The unit spheres $S(v)$ of a Platonic 
d-manifold have to be Platonic (d-1) manifold and there are only 2 Platonic 2-spheres 
(assuming to be a 2-manifold), the octahedron $K_{2,2,2}$ and the 
icosahedron. In all other Platonic cases, the unit spheres are not spheres.
Gauss-Bonnet quickly shows that the only Platonic $d$-spheres for $d>3$ are
the cross polytopes $K_{2,2,\dots, 2}$. Gauss-Bonnet prevents that a 600 cell
is the unit sphere of all vertices of a $4$-sphere:
the curvature of a 4-manifold is $K(v)= 1-E(v)/6+F(v)/10$.  Because
The f-vector of the 600 cell is $(120, 720, 1200, 60)$, the curvature
would have to be constant $K(v)=1-720/6+1200/10=1$. 
The Euler characteristic of a 4-sphere is by Euler's Gem formula equal to $2$. 
There is obviously no 4-manifold with 2 vertices. 
See \cite{AmazingWorld} for an exposition.

\section{Deformation of the curvature}

\paragraph{}
The curvature produces a dynamics on $k$-forms. We are using the {\bf wave operator} 
$w_t=\cos(Dt)$ which solves the wave equation $w''=-L w$. 
\footnote{More generally, we could take $w=e^{iDt}$ or $e^{-iDt}$ and so get
complex-valued curvatures.}
Define $\omega_t(x)=(-1)^{{\rm dim}(x)} \cos(D t)(x,x)$ and the time-dependent
curvature
$$   K_{k,t}(x)= \sum_{y \leq x} \frac{\omega_t(y)}{{\rm deg}_k(y)} 
   + \sum_{y \geq x} \frac{\omega_t(y)}{\Binomial{{\rm dim}(y)+1}{k+1}}  \; . $$
This function is implemented in the Mathematica code below. If $t=0$ it is the
form curvature $K_k(x)$. For every $t$ and every $k$, we still have Gauss Bonnet: 

\begin{coro}[Gauss-Bonnet for time dependent curvatures]
If $G_k$ covers $G$, then $\chi(G) = \sum_{x \in G_k} K_{k,t}(x)$ 
for all $t$.
\end{coro}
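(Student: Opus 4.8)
The plan is to observe that the proof of Theorem~1 is a pure \emph{conservation of energy} argument which is linear in, and otherwise indifferent to, the numbers placed on the simplices. So the first step is to isolate the combinatorial core of that proof in the following form: for \emph{any} function $f\colon G\to\mathbb{R}$, set $K_k^f(x)=\sum_{y\subset x}\frac{f(y)}{d_k(y)}+\sum_{y\supseteq x}\frac{f(y)}{\Binomial{{\rm dim}(y)+1}{k+1}}$ for $x\in G_k$. The very bookkeeping used in the proof of Theorem~1 — a simplex $y$ with ${\rm dim}(y)\le k$ feeds $f(y)/d_k(y)$ into each of the $d_k(y)$ many $k$-simplices containing it, and a simplex $y$ with ${\rm dim}(y)\ge k$ feeds $f(y)/\Binomial{{\rm dim}(y)+1}{k+1}$ into each of its $\Binomial{{\rm dim}(y)+1}{k+1}$ many $k$-dimensional faces — shows that, provided $G_k$ covers $G$, every value $f(y)$ is redistributed with total weight $1$, so that $\sum_{x\in G_k}K_k^f(x)=\sum_{y\in G}f(y)$. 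This identity contains Theorem~1 as the special case $f=\omega$.

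The second step is to apply this with $f=\omega_t$, where $\omega_t(y)=(-1)^{{\rm dim}(y)}\cos(Dt)(y,y)$; since $K_{k,t}=K_k^{\omega_t}$ by definition, we get $\sum_{x\in G_k}K_{k,t}(x)=\sum_{y\in G}\omega_t(y)$. The third step is to recognize the right-hand side as a supertrace and evaluate it: $\sum_{y\in G}\omega_t(y)=\sum_{y\in G}(-1)^{{\rm dim}(y)}\cos(Dt)(y,y)={\rm str}(\cos(Dt))$. Expanding $\cos(Dt)=\sum_{n\ge 0}\frac{(-1)^n t^{2n}}{(2n)!}D^{2n}=\sum_{n\ge 0}\frac{(-1)^n t^{2n}}{(2n)!}L^n$ as a convergent operator series on the finite-dimensional space $l^2(G)$, the continuous linear functional ${\rm str}$ passes through term by term, and the McKean-Singer symmetry ${\rm str}(L^n)=0$ for $n\ge 1$ annihilates every term except $n=0$. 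Hence ${\rm str}(\cos(Dt))={\rm str}(1)=\chi(G)$, and chaining the three steps gives $\sum_{x\in G_k}K_{k,t}(x)=\chi(G)$ for all $t$. The identical computation handles the complex variants $w=e^{\pm iDt}$ of the footnote: $\sin(Dt)=\sum_n\frac{(-1)^n t^{2n+1}}{(2n+1)!}D^{2n+1}$ is built from odd powers of $D$, each of which shifts degree by an odd amount and therefore has vanishing diagonal blocks, so ${\rm str}(\sin(Dt))=0$ and ${\rm str}(e^{\pm iDt})={\rm str}(\cos(Dt))=\chi(G)$.

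I expect the only delicate point — and it is minor — to be the ``diagonal'' case ${\rm dim}(y)=k$ in the first step: such a $y$ is at once a subsimplex and a supersimplex of itself, with $d_k(y)=1=\Binomial{k+1}{k+1}$, so the two sums defining $K_{k,t}(x)$ must be set up with (non-)strictness conventions that distribute the self-energy $\omega_t(x)$ of $x\in G_k$ to $x$ alone and count it exactly once; this is the convention already implicit in Theorem~1 and its proof. Everything else is formal: the redistribution identity is a finite partition-of-unity count, and the evaluation ${\rm str}(\cos(Dt))=\chi(G)$ is quoted directly from the McKean-Singer symmetry recalled above. In particular no new analytic input is required, and the argument makes transparent that Gauss-Bonnet survives \emph{any} deformation of the energies through functions of $L=D^2$ together with odd-degree-shifting perturbations, since the latter never contribute to the supertrace.
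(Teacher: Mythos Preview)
Your proof is correct and follows essentially the same approach as the paper: first observe that the redistribution argument behind Theorem~1 is linear in the energies and hence applies verbatim to any function $f$ on $G$, then take $f=\omega_t$ and invoke the McKean-Singer symmetry ${\rm str}(L^n)=0$ for $n\ge 1$ to conclude ${\rm str}(\cos(Dt))=\chi(G)$. Your write-up is more explicit than the paper's (in particular the power-series justification and the handling of the diagonal case ${\rm dim}(y)=k$, where the paper's notation is slightly loose), but the underlying idea is identical.
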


\begin{proof} 
For any function $f$, the functional calculus defines $f(D)$ as $D$ is symmetric. 
The wave operator $\cos(D t)$ is a symmetric $n \times n$ matrix. 
The McKean-Singer symmetry assuring that ${\rm str}(L^k)=0$ for all $k>0$ implies that
the super trace of $\cos(D t)$ is equal to $\chi(G)$ and so that it is independent of $t$. 
We can now use the diagonal values $\omega_t(x)$ instead of $\omega(x)=\omega_0(t)$. 
\end{proof} 

\paragraph{}
Note that the curvature functions $K_{k,t}$ on $G_k$ do not satisfy 
the wave equation by themselves. It is that the underlying energies values 
given by the diagonal elements of the wave operator that move according to the 
wave equation. The curvature $K(x)$ on k-forms then aggregates all energies of 
simplices close to $x$.  There is no choice of initial condition for this 
evolution because $K_{k,0}(x)=K_k(x)$ is the curvature on $k$-forms, 
introduced before. 

\begin{figure}[!htpb]
\scalebox{0.28}{\includegraphics{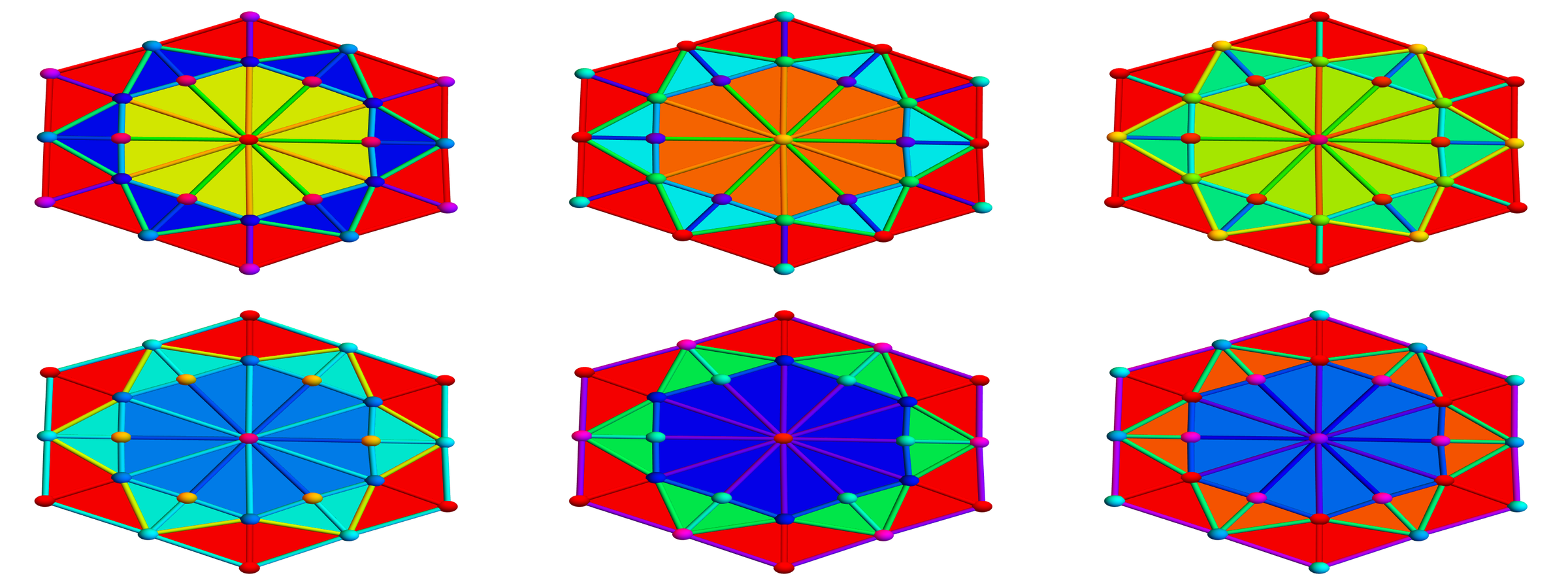}}
\label{cube}
\caption{
A wave deformation in the case of second Barycentric refinement of $K_3$. 
The wave is evaluated at times $k/2$ for $k=1,\dots, 6$.
The colors encode the vertex, edge and wave curvatures. 
}
\end{figure}

\paragraph{}
{\bf Non-linear wave versions} come from 
{\bf integrable Lax pair systems} $D'=[g(D)^+-g(D)^-,D]$ \cite{IsospectralDirac,IsospectralDirac2} , 
leading to a unitary $Q_t'=(g(D)^+-g(D)^-)Q_t$ satisfying $D_t=Q_t^* D_0 Q_t$. 
As noted in \cite{QRKnill}, the {\bf non-linear} {\bf iso-spectral deformation}
$D'=[g(D)^+-g(D)^-,D]$ is equivalent to the QR deformation like $e^{-t g(D_0)} = Q_t R_t$
and get a new Dirac matrix $D_t=Q_t^* D Q_t$ which has the form 
$D_t=d_t + d_t^* = Q_t^* d Q_t + Q_t^* d^* Q_t$ which is now 
block tri-diagonal $D_t = c_t + c_t^* + m_t$. The deformation can move spontaneously 
also in the complex by modifying to 
\begin{equation}
\label{complex evolution}
D'=[g(D)^+-g(D)^- + i c g(D)^0,D] 
\end{equation}
with some nonzero constant $c$.

\paragraph{}
If we want a linear and discrete time evolution, we could look at 
$T(u,v) = (c 2 Du-v, u)$, where $c$ is so small that $||cD||<1$ rather than the 
pair $u'= \pm i c D u$ or wave equation \cite{Kni98}. 
There is then still a unitary evolution $U_t = \exp(i t \arccos(cD))$, and 
the super trace of $U_t$ is still constant. The advantage of such a discrete 
time evolution is that it has {\bf finite propagation speed}, meaning causality. 
Causality can also be obtained in the nonlinear Lax deformation mentioned above.
We just need to make sure that $e^{-t g(D)}$ is local which is achieved for
$g(D)=-\log(1+c D)$ which is defined if $||cD||<1$. In that case 
$e^{-t g(D)}=(1+cD)^t$. The orthogonal matrix $Q_t$ then does not relate points
in distance larger than $t$. 

\paragraph{}
By the way, we introduced the discrete time evolution in \cite{Kni98} 
for numerical purposes, like to measure the nature of spectra of discrete Schr\"odinger
operators. The growth rate of the spectral measures determines the spectral type. 
If we can numerically compute the growth rate with an evolution that has finite propagation 
speed, we know that any boundary effects do not play a role as long as the wave has not
reached the boundary. When using rational or algebraic numbers, the evolution is {\bf exact}.
If the target space (usually $\mathbb{R}$ is replaced by a ring element, we can study
arithmetic versions. With a finite ring, we get cellular automata \cite{Wolfram86}.
Also in the present geometric frame work, we could replace real or complex valued functions
with functions taking values in other rings, like also finite rings. An obvious generalization
is to expand the non-linear evolution to quaternions. We would just have to replace the $i$ in the
differential equation (\ref{complex evolution}) with a unit quaternion. 

\section{Poincar\'e-Hopf}

\paragraph{}
A scalar function $g:G_k \to \mathbb{R}$ is called {\bf locally injective} if 
$g(x) \neq g(y)$ if either $x \cap y$ is not empty or if $x \cup y$ are in a common simplex. 
Such a function defines a {\bf total order} on the vertices of each simplex $x$ of $G_k$. 
It allows to every simplex $x \in G$ assign a specific $y \in G_k$ with 
$y \subset x$ or $x \subset y$ on which $g$ is the maximum. More generally, we just 
assign a function $F: G \to G_k$ such that $F(y) \subset y$ or $y \subset F(y)$. This gives the 
k-dimensional {\bf Poincar\'e-Hopf index}
$$ i_{g,k,t}(x)= 1- \chi_t(S_g^+) -\chi_t(S_g^-) \; , $$
where $S_g^+ = \{y, y \subset x, g(y)<g(x) \}$
and $S_g^- = \{y, x \subset y, g(y)<g(x) \}$. 

\paragraph{}
There is now a more general Poincar\'e-Hopf formula, where the integer indices are located on 
$k$-dimensional simplices. It again holds only if the $k$-dimensional part of space covers it
meaning $\bigcup_{x \in G_k} x=\bigcup_{x \in G} x = V$.

\begin{thm}[Poincar\'e-Hopf]
If $\bigcup_{x \in G_k} x$ covers $G$, then $\chi(G) = \sum_{x \in G_k} i_{k}(x)$. 
\end{thm}
\begin{proof}
Every simplex $y \in G$ will be pointed to a specific $x \in G_k$. But this needs that 
the $k$-simplices cover $G$.  Moving the energies
accordingly from $G$ to $G_k$ does not change the total energy which is the Euler characteristic. 
\end{proof} 

\section{Index expectation}

\paragraph{}
For $k=0$, where $K_0$ is the Levitt curvature on vertices, the curvature is constant $0$ if $G$ is 
an odd-dimensional manifold. Conjectured in \cite{cherngaussbonnet}, it was
verified using index expectation in \cite{indexexpectation,indexformula} and later seen it as
a Dehn-Sommerville relation \cite{DehnSommerville}, establishing so zero curvature also 
for odd-dimensional Dehn-Sommerville manifolds \cite{DehnSommerville}, 
simplicial complexes that generalize $d$-spheres but satisfy Dehn-Sommerville relations. 
Graphs for which the unit spheres are Dehn-Sommerville manifolds of dimension 
$d-1$ can be seen as generalized $d$-manifolds. We wondered whether for odd dimensional 
manifolds, also higher curvatures $K_k$ are constant $0$. It turns out that this is not the
case in general. 

\paragraph{}
The Poincar\'e-Hopf story on k-dimensional simplices is a tale about divisors on k-dimensional
parts of space. One can see them as integer-valued curvatures. If we take a probability measure
on the space of locally injective functions on $G_k$ and take the expectation of
$$ \sum_{x \in G_k} i_f(x) = \chi(G) \; .  $$
We get on the left hand side sum of curvatures
$$  K(x)={\rm E}[i_f(x)]  $$
on $G_k$. Index expectation links the discrete and continuum.
if $G$ is a finite simple graph with Euler characteristic $\chi(G)=\sum_k (-1)^k f_k(G)$, 
where $f_k(G)$ counts the number of k-dimensional complete sub-graphs in $G$, the Levitt curvature
$K(v) = 1-\sum_{k} (-1)^k f_k(S(v))/(k+1)$ satisfies Gauss-Bonnet $\sum_{v \in V} K(v) = \chi(G)$.
Allowing the probability measure to vary gives more flexibility. We can for example embed the network 
$G$ into a high dimensional linear space $E$ and the space of linear functions on $E$ (parametrized by
vectors on the unit sphere equipped with the homogeneous measure) as probability space, then 
${\rm E}[i_f(x)]$ is a curvature which is close to the continuum if the network 
is close to the physical space we want to model. 
If we take random IID values $f(x)$ on $G_k$ with uniform distribution in some interval
for example, then the index expectation is the 
curvature $K_k(x)$ considered above. 

\paragraph{} 
For $K_0$, there is a generalization of Gauss-Bonnet in which one 
does not work with Euler characteristic but more generally with the simplex generating
function 
$$   f_G(t)=\sum_{k=0}^{\infty} f_k(G) t^k $$ 
which for $t=-1$ relates to the Euler characteristic by $\chi(G)=1-f_G(-1)$. 
The Levitt Gauss-Bonnet formula now reads more naturally as
$$   f_G'(t) = \sum_{v \in V} f_{S(v)}(t)  $$ 
which after integration and evaluating at $t=-1$ gives $1-\chi(G)$ and so Gauss-Bonnet. 
Since the Levitt formula is an expectation of Poincar\'e-Hopf indices, this Levitt curvature is 
the discrete analog of the Gauss-Bonnet-Chern curvature $K$ which involves the Riemann
curvature tensor $R_{i,j,k,l}$. It is 
$$ K = (-1)^n \sum_{i,j} \epsilon_{i_1,...,i_{2n}} \epsilon_{j_1,...,j_{2n}} 
         R_{i_1 i_2 j_1 j_2}  \cdots R_{i_{2n-1}} i_{2n} j_{2n-1} j_{2n} $$
for Riemannian manifolds. In the discrete, curvature is zero for odd-dimensional 
manifolds, in the continuum, the intrinsic curvature of odd-dimensional manifolds is not even
considered. 

\begin{figure}[!htpb]
\scalebox{0.75}{\includegraphics{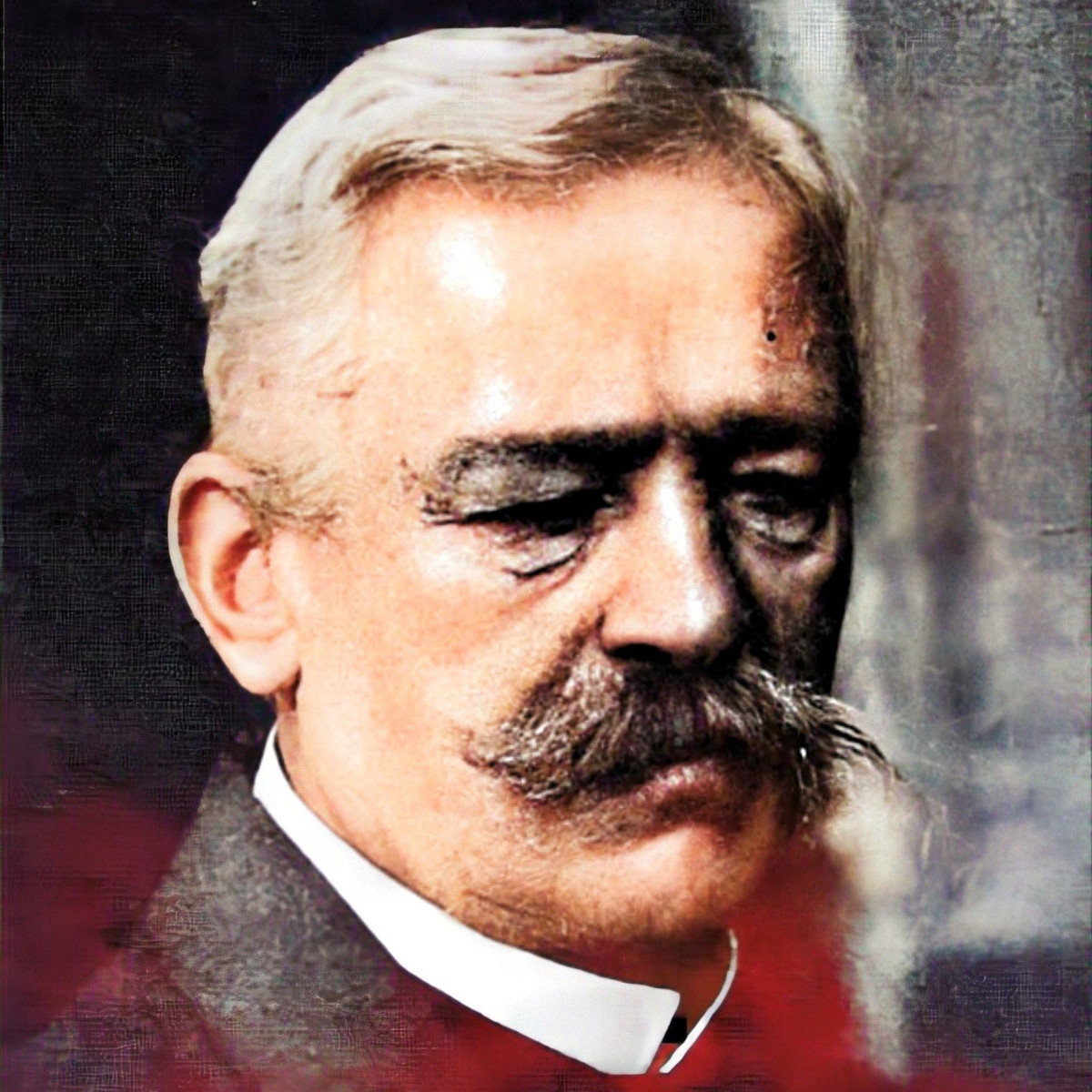}}
\label{eberhard}
\caption{
  Victor Eberhard (1861-1927) in 1922. Eberhard lost his eye sight as a 13 year old and showed
  enormous resilience, developing strong intuition, memory, humor and positivity to overcome 
  the handicap. The photo source is the biography \cite{Rosenthal} about Eberhard. 
   We colorized the photograph electronically using a tool provided by MyHeritage.com.
}
\end{figure}

\section{Puiseux relation}

\paragraph{}
In \cite{elemente11}, we considered the Puiseux type curvature $K(v)=2 |S(v)|-|S_2(v)|$
for flat $2$-dimensional plains, where we proved $\sum_{p \in \delta G}  K(p) = 12 \chi(G)$. 
We had felt at that time that a good curvature should be a second order notion, meaning
that it should involve neighborhoods of distance $2$. This was motivated by the
classical Puiseux formula $K(v) = \lim_{r \to 0} 3 \frac{2\pi r-|S_r|}{\pi r^3}$ which is
equivalent to the R2-D2 formula ({\bf R}adius times two in {\bf D}imension 2) 
$$   K(v) = \lim_{r \to 0} \frac{2|S_r|-|S_{2r}|}{2\pi r^3} \; . $$
\begin{comment}
Series[ (4 Pi R Sin[r/R] -2Pi R Sin[2r/R])/(2 r^3 Pi) ,{r,0,5}]
Series[ (4 Pi R Sinh[r/R] -2Pi R Sinh[2r/R])/(2 r^3 Pi) ,{r,0,5}]
\end{comment}
For example, for a sphere of radius $R$, where $r = R \phi$ and
$|S_r| = 2\pi R \sin(r/R)$. The R2-D2 formula gives
$4 \pi R \sin(r/R) - 2 \pi R \sin(2r/R) = 2 \pi r^3/R^2 - 2\pi r^5/(4 R^4) + ...$ so that $K = 1/R^2$.
Similarly, for a hyperbolic plane with $|S_r| = 2 \pi R \sinh(r/R)$, we get $K = -1/R^2$.

\paragraph{}
If $C$ is the boundary of a planar region $G$ and $|S_r|$ is the arc length of $S_r$,
the signed curvature is given by the {\bf R2-D2} formula  for $v \in C$ 
$$   K(v) = \lim_{r \to 0} \frac{ 2 |S_r(v)| - |S_{2r}(v)| } {2 r^2} \; . $$
For a disk of radius $R$ one has $|S_r(v)| = 2 r \arccos(r/(2R))$ for a point
on the boundary,
% s[r_] := 2 r ArcCos[K r/2]; Series[2 s[r] - s[2 r], {r, 0, 2}]
% Make a picture where $R$ is the hypothenuse and $r/2$ the adjacent side
so that $(2 |S_r(v)| - |S_{2r}(v)|)/(2r^2) = 1/R + O(r^2)$ is also intrinsic and 
parameter independent.

\begin{figure}[!htpb]
\scalebox{0.55}{\includegraphics{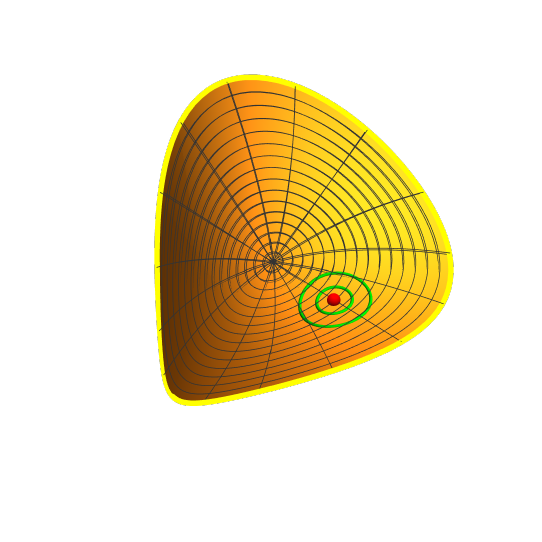}}
\scalebox{0.55}{\includegraphics{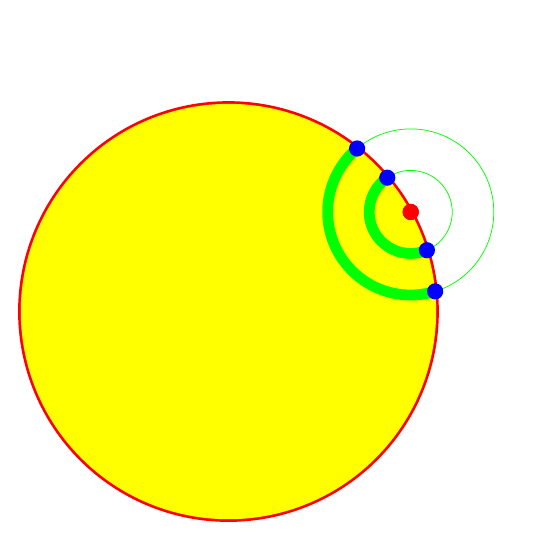}}
\label{rp4}
\caption{
The R2-D2 formula is a classical expression for the curvature of a 2-dimensional
manifold. It holds in the interior and on the boundary. Unlike the classical
{\bf Bertrand-Diquet-Puiseux formula} \cite{BergerPanorama} is
$K = \lim_{t \to 0} \frac{3 (2\pi r - |S_r|)}{r^3 \pi}$ the R2-D2 formulas do
not refer the arc length $2\pi r$ in the flat case.
%For a sphere of radius $R$ for example, where $|S_r| = 2\pi R \sin(r/R)$, we have
%$$  \frac{3 (2\pi r - |S_r|)}{r^3 \pi} = \frac{1}{R^2} - \frac{r^2}{20 R^4} + \dots $$
}
\end{figure}

\paragraph{}
In the discrete, Puiseux fails for general 2-manifolds already. But we can 
get something related now using curvatures on k-forms: 
it follows from the edge curvature that for a 2-manifold we have the curvature
$$   K(v) = 1-\frac{d(v)}{6} + \frac{1}{2}[-1+\sum_{w \in S(v)} \frac{1}{d(w)}] \;  $$
on vertices. A similar curvature appear in \cite{Gromov87}.
Since $\sum_v 1-\frac{d(v)}{6} = \chi(G)$ and $\sum_v K(v) = \chi(G)$ we see
$\sum_v \sum_{w \in S(v)} \frac{1}{d(w)} = V$. Compare the
{\bf Euler handshake formula} $\sum_{v \in V} \sum_{w \in S(v)} 1  = 2E$ which 
by the way is the Gauss-Bonnet formula for the valuation $E(G) = |E|$. 
\footnote{Euler handshake has been called "fundamental theorem of graph theory". 
It is a Gauss-Bonnet relation for the valuation $f_1(G)$. 
Gauss-Bonnet can be formulated for general valuations. }

\section{Code}

\paragraph{}
The following Mathematica code allows to compute the k-curvatures also after
a wave deformation. The case without deformation is obtained by plugging in $t=0$. 

\begin{tiny}
\lstset{language=Mathematica} \lstset{frameround=fttt}
\begin{lstlisting}[frame=single]
Generate[A_]:=If[A=={},{},Sort[Delete[Union[Sort[Flatten[Map[Subsets,A],1]]],1]]];
Cl[G_]:=Union[Sort[Map[Sort,G]]];L=Length;sig[x_]:=Signature[x];Bin=Binomial;Rk=MatrixRank;
Whitney[s_]:=Cl[Generate[FindClique[s,Infinity,All]]];nu[A_]:=If[A=={},0,L[A]-Rk[A]];
F[G_]:=Module[{l=Map[L,G]},If[G=={},{},Table[Sum[If[l[[j]]==k,1,0],{j,L[l]}],{k,Max[l]}]]];
sig[x_,y_]:=If[SubsetQ[x,y]&&(L[x]==L[y]+1),sig[Prepend[y,Complement[x,y][[1]]]]*sig[x],0];
Dirac[G_]:=Module[{f=F[G],b,d,n=L[G]},b=Prepend[Table[Sum[f[[l]],{l,k}],{k,L[f]}],0];
 d=Table[sig[G[[i]],G[[j]]],{i,n},{j,n}]; {d+Transpose[d],b}];
Hodge[G_]:=Module[{Q,b,H},{Q,b}=Dirac[G];H=Q.Q;Table[Table[H[[b[[k]]+i,b[[k]]+j]],
 {i,b[[k+1]]-b[[k]]},{j,b[[k+1]]-b[[k]]}],{k,L[b]-1}]];           omega[x_]:=(-1)^(L[x]-1);
Betti[s_]:=Module[{G},If[GraphQ[s],G=Whitney[s],G=s];Map[nu,Hodge[G]]];
Fvector[A_]:=Delete[BinCounts[Map[L,A]],1];         Euler[A_]:=Sum[omega[A[[k]]],{k,L[A]}]; 
UnitSphere[s_,v_]:=VertexDelete[NeighborhoodGraph[s,v],v];
OpenStar[G_,x_]:=Select[G,SubsetQ[#,x] &]; deg[G_,x_,k_]:=F[OpenStar[G,x]][[k+1]];
Curvatures[s_,k_,t_]:=Module[{G,A,B,Q},If[GraphQ[s],G=Whitney[s],G=s];
 If[t>0,Q=First[Dirac[G]];U=Re[MatrixExp[t*I*Q]],U=IdentityMatrix[Length[G]]];
 A=Select[G,(L[#]==k+1)&]; If[L[Union[Flatten[A]]]<L[Union[Flatten[G]]],Print["No cover"]]; 
 Table[y=A[[l]]; Sum[x=G[[i]];omega[x]*U[[i,i]]*If[SubsetQ[y,x],1/deg[G,x,k],
                              If[SubsetQ[x,y],1/Bin[L[x],k+1],0]],{i,L[G]}],{l,L[A]}]];
Curvatures[s_,k_]:=Curvatures[s,k,0]; 
s = RandomGraph[{20, 50}];{Total[Curvatures[s,1]], Euler[Whitney[s]]}
s = ToGraph[Whitney[ToGraph[Whitney[CompleteGraph[3]]]]];
Table[CurvatureGemPlot[s, 1.0*k/4], {k,1,6}]; 
\end{lstlisting}
\end{tiny}

\paragraph{}
Lets look at some 3 manifolds. We start with a non-standard 3-sphere, the homology 
sphere. It can be found on \cite{Manifoldpage}. See also \cite{Lutz1999,BjoernerLutz}. 

\begin{tiny}
\lstset{language=Mathematica} \lstset{frameround=fttt}
\begin{lstlisting}[frame=single]
s=HomologySphere=Generate[
{{0,1,5,11},{0,1,5,17},{0,1,7,13},{0,1,7,19},{0,1,9,15},{0,1,9,21},{0,1,11,13},
{0,1,15,19},{0,1,17,21},{0,2,6,12},{0,2,6,18},{0,2,7,13},{0,2,7,19},{0,2,10,16},
{0,2,10,22},{0,2,12,13},{0,2,16,19},{0,2,18,22},{0,3,8,14},{0,3,8,20},{0,3,9,15},
{0,3,9,21},{0,3,10,16},{0,3,10,22},{0,3,14,22},{0,3,15,16},{0,3,20,21},{0,4,5,11},
{0,4,5,17},{0,4,6,12},{0,4,6,18},{0,4,8,14},{0,4,8,20},{0,4,11,12},{0,4,14,18},
{0,4,17,20},{0,11,12,13},{0,14,18,22},{0,15,16,19},{0,17,20,21},{1,2,5,11},{1,2,5,17},
{1,2,8,14},{1,2,8,20},{1,2,10,16},{1,2,10,22},{1,2,11,14},{1,2,16,20},{1,2,17,22},
{1,3,6,12},{1,3,6,18},{1,3,7,13},{1,3,7,19},{1,3,8,14},{1,3,8,20},{1,3,12,20},{1,3,13,14},
{1,3,18,19},{1,4,6,12},{1,4,6,18},{1,4,9,15},{1,4,9,21},{1,4,10,16},{1,4,10,22},{1,4,12,16},
{1,4,15,18},{1,4,21,22},{1,11,13,14},{1,12,16,20},{1,15,18,19},{1,17,21,22},{2,3,5,11},
{2,3,5,17},{2,3,6,12},{2,3,6,18},{2,3,9,15},{2,3,9,21},{2,3,11,15},{2,3,12,21},{2,3,17,18},
{2,4,7,13},{2,4,7,19},{2,4,8,14},{2,4,8,20},{2,4,9,15},{2,4,9,21},{2,4,13,21},{2,4,14,15},
{2,4,19,20},{2,11,14,15},{2,12,13,21},{2,16,19,20},{2,17,18,22},{3,4,5,11},{3,4,5,17},
{3,4,7,13},{3,4,7,19},{3,4,10,16},{3,4,10,22},{3,4,11,16},{3,4,13,22},{3,4,17,19},
{3,11,15,16},{3,12,20,21},{3,13,14,22},{3,17,18,19},{4,11,12,16},{4,13,21,22},{4,14,15,18},
{4,17,19,20},{11,12,13,23},{11,12,16,23},{11,13,14,23},{11,14,15,23},{11,15,16,23},
{12,13,21,23},{12,16,20,23},{12,20,21,23},{13,14,22,23},{13,21,22,23},{14,15,18,23},
{14,18,22,23},{15,16,19,23},{15,18,19,23},{16,19,20,23},{17,18,19,23},{17,18,22,23},
{17,19,20,23},{17,20,21,23},{17,21,22,23}}];
Curvatures[s,0]
Curvatures[s,1]
Curvatures[s,2]
Curvatures[s,3]
\end{lstlisting}
\end{tiny}

\paragraph{}
The f-vector of the homology manifold is $(24, 154, 260, 130)$. 
The 24 vertex curvatures $K_0$ are constant $0$ as for any $3$-manifold. 
The 154 edge curvatures $K_1$ take values in $\{ -67/462, -5/132$, $0, 5/84, 13/22 \}$, 
the 260 face curvatures $K_2$ take values $\{ -1/30, -1/60, -1/90, 0, 1/18 \}$ 
and the 130 chamber curvatures $K_3$ take values $\{-1/40, -1/90, 0, 7/180\}$. 

\paragraph{}
The following projective 3-space $\mathbb{RP}^3 \sim SO(3)$ is also 
from \cite{Manifoldpage}. 
%  SU(2) = S^3, SO(3)=SU(2)/Z2 = S^3/Z2=RP3, 

\begin{tiny}
\lstset{language=Mathematica} \lstset{frameround=fttt}
\begin{lstlisting}[frame=single]
s=RP3=ProjectiveThreeSpace = Generate[
{{1,2,3,4},{1,2,3,5},{1,2,4,6},{1,2,5,6},{1,3,4,7},{1,3,5,7},{1,4,6,7},{1,5,6,7},
{2,3,4,8},{2,3,5,9},{2,3,8,9},{2,4,6,10},{2,4,8,10},{2,5,6,11},{2,5,9,11},{2,6,10,11},
{2,7,8,9},{2,7,8,10},{2,7,9,11},{2,7,10,11},{3,4,7,11},{3,4,8,11},{3,5,7,10},{3,5,9,10},
{3,6,8,9},{3,6,8,11},{3,6,9,10},{3,6,10,11},{3,7,10,11},{4,5,8,10},{4,5,8,11},{4,5,9,10},
{4,5,9,11},{4,6,7,9},{4,6,9,10},{4,7,9,11},{5,6,7,8},{5,6,8,11},{5,7,8,10},{6,7,8,9}}]; 
Curvatures[s, 0]
Curvatures[s, 1]
Curvatures[s, 2]
Curvatures[s, 3]
\end{lstlisting}
\end{tiny}

\paragraph{}
For the projective 3-space the f-vector is $(11, 51, 80, 40)$. The Euler characteristic is $0$, 
the betti vector is $(1,0,0,1)$. 
The 11 vertex curvatures are zero. The 51 curvatures $K_1$ on edges take values in 
$\{-2/15, -1/9, -1/15, 1/30, 2/45 \}$. The 80 curvatures $K_2$ on faces take 
values in $\{ -1/30, -2/105, -11/840, 1/40, 13/420 \}$. The 40 curvatures $K_3$
on chapters take values in $\{ -3/80, -9/280, 1/56, 33/560 \}$. 

\paragraph{}
Here is an example of a small 4-manifold $\mathbb{C}\mathbb{P}^2$. 
\begin{tiny}
\lstset{language=Mathematica} \lstset{frameround=fttt}
\begin{lstlisting}[frame=single]
s=CP2=Generate[{{1,2,3,4,5},{1,2,3,4,7},{1,2,3,5,8},{1,2,3,7,8},{1,2,4,5,6},
{1,2,4,6,7},{1,2,5,6,8},{1,2,6,7,9},{1,2,6,8,9},{1,2,7,8,9},{1,3,4,5,9},
{1,3,4,7,8},{1,3,4,8,9},{1,3,5,6,8},{1,3,5,6,9},{1,3,6,8,9},{1,4,5,6,7},
{1,4,5,7,9},{1,4,7,8,9} {1,5,6,7,9},{2,3,4,5,9},{2,3,4,6,7},{2,3,4,6,9},
{2,3,5,7,8},{2,3,5,7,9},{2,3,6,7,9},{2,4,5,6,8},{2,4,5,8,9},{2,4,6,8,9},
{2,5,7,8,9},{3,4,6,7,8},{3,4,6,8,9},{3,5,6,7,8},{3,5,6,7,9},{4,5,6,7,8},{4,5,7,8,9}}];
Curvatures[s, 0]
Curvatures[s, 1]
Curvatures[s, 2]
Curvatures[s, 3]
\end{lstlisting}
\end{tiny}

\paragraph{}
The f-vector is $(9, 36, 84, 90, 36)$. The Euler characteristic is $3$. The 
Betti numbers are $(1,0,1,0,1)$. 
The $K_0$ curvature is constant $1/3$. The edge curvature $K_1$ is constant $1/12$. 
The face curvatures $K_2$ take value in $\{ -31/140,-1/14,11/140,8/35 \}$. The
3-dimensional curvatures $K_3$ take values in $\{0,1/20,1/30\}$. The 4-dimensional
curvatures finally are all positive again taking values in $1/30,1/10$. 
The $4$-manifold $CP^2$ is an example of a $1/4$ pinched positive curvature manifold.

\paragraph{}
Finally lets mention some questions: What does it mean for $G$ to have $K_1(x)>0$ for all $x$?
Or does it mean for $G$ to have $K_2(x)>0$ for all $x$? We originally looked into the face 
curvature because it felt like something close to 
{\bf sectional curvature}, a notion a bit tricky to 
define in the discrete. As we see in the examples, positive sectional curvature can not 
just be replaced by positive $K_2$ curvature. We had been searching for index expectation 
curvatures $K$ (also in the discrete) which have the property that if a manifold has positive
sectional curvature, then this index  expectation $K$ is positive. That would settle the 
Hopf conjecture. The Gauss-Bonnet-Chern curvature used in the Gauss-Bonnet-Chern theorem
which was developed from 1927 (Hopf) to 1944 (Chern) 
\cite{HopfCurvaturaIntegra,Allendoerfer,Fenchel,AllendoerferWeil,Chern44} works only 
until dimension 4 \cite{Chern1955} but fails for 6-manifolds already \cite{Geroch,Klembeck}. 
One of our motivations to look at curvatures supported on higher dimensional parts of space
was to explore whether there are natural discrete versions of sectional curvature (which is
not just a numerical scheme coming from the continuum) and (a very long shot) to get a
discrete curvature that is positive for sufficiently nice discretizations of positive
curvature manifolds. Index expectation curvatures allow more experimentation. Using 
index expectation curvatures on higher dimensional simplices allows to widen the search. 

\bibliographystyle{plain}

\begin{thebibliography}{10}

\bibitem{Aczel}
A.~Aczel.
\newblock {\em Descartes's secret notebook, a true tale of Mathematics,
  Mysticism and the Quest to Understand the Universe}.
\newblock Broadway Books, 2005.

\bibitem{AllendoerferWeil}
C.~Allendoerfer and A.~Weil.
\newblock The {G}auss-{B}onnet theorem for {R}iemannian {P}olyhedra.
\newblock {\em Transactions of the American Mathematical Society}, 53:101--129,
  1943.

\bibitem{Allendoerfer}
C.B. Allendoerfer.
\newblock The {E}uler number of a {R}iemann manifold.
\newblock {\em Amer. J. Math.}, 62:243, 1940.

\bibitem{BergerPanorama}
M.~Berger.
\newblock {\em A Panoramic View of Riemannian Geometry}.
\newblock Springer, 2003.

\bibitem{Heesch}
H-G. Bigalke.
\newblock {\em Heinrich Heesch, Kristallgeometrie, Parkettierungen,
  Vierfarbenforschung}.
\newblock Birkh{\"a}user, 1988.

\bibitem{BjoernerLutz}
A.~Bj{\"o}rner and F.H. Lutz.
\newblock {A 16-Vertex Triangulation of the Poincar\'e Homology 3-Sphere and
  Non-PL Spheres with Few Vertices}.
\newblock
  {{\\}http://www.eg-models.de/models/Simplicial\_Manifolds/2003.04.001}, 2003.

\bibitem{Chern44}
S.-S. Chern.
\newblock A simple intrinsic proof of the {Gauss-Bonnet} formula for closed
  {Riemannian} manifolds.
\newblock {\em Annals of Mathematics}, 45, 1944.

\bibitem{Chern1955}
S.-S. Chern.
\newblock On curvature and characteristic classes of a {R}iemann manifold.
\newblock {\em Abh. Math. Sem. Univ. Hamburg}, 20:117--126, 1955.

\bibitem{Eberhard1891}
V.~Eberhard.
\newblock {\em Morphologie der Polyeder}.
\newblock Teubner Verlag, 1891.

\bibitem{princetonguide}
T.~Gowers (Editor).
\newblock {\em The Princeton Companion to Mathematics}.
\newblock Princeton University Press, 2008.

\bibitem{Euler1758}
L.~Euler.
\newblock Elementa doctrinae solidorum.
\newblock {\em Novi Commentarii academiae scientiarum Petropolitanae},
  4:109--140, 1758.

\bibitem{Fenchel}
W.~Fenchel.
\newblock On total curvatures for {R}iemannianm manifolds (i).
\newblock {\em J. London Math. Soc}, 15:15, 1940.

\bibitem{Geroch}
R.~Geroch.
\newblock Positive sectional curvatures does not imply positive {Gauss-Bonnet}
  integrand.
\newblock {\em Proceedings of the AMS}, 54, 1976.

\bibitem{Gromov87}
M.~Gromov.
\newblock Hyperbolic groups.
\newblock In {\em Essays in group theory}, volume~8 of {\em Math. Sci. Res.
  Inst. Publ.}, pages 75--263. Springer, 1987.

\bibitem{Higuchi}
Y.~Higuchi.
\newblock Combinatorial curvature for planar graphs.
\newblock {\em J. Graph Theory}, 38:220--229, 2001.

\bibitem{HopfCurvaturaIntegra}
H.~Hopf.
\newblock \"{U}ber die {C}urvatura integra geschlossener {H}yperfl\"{a}chen.
\newblock {\em Math. Ann.}, 95(1):340--367, 1926.

\bibitem{Klembeck}
P.F. Klembeck.
\newblock On {G}eroch's counterexample to the algebraic {H}opf conjecture.
\newblock {\em Proc. of the AMS}, 59, 1976.

\bibitem{Kni98}
O.~Knill.
\newblock A remark on quantum dynamics.
\newblock {\em Helvetica Physica Acta}, 71:233--241, 1998.

\bibitem{cherngaussbonnet}
O.~Knill.
\newblock A graph theoretical {Gauss-Bonnet-Chern} theorem.
\newblock {\\}http://arxiv.org/abs/1111.5395, 2011.

\bibitem{elemente11}
O.~Knill.
\newblock A discrete {Gauss-Bonnet} type theorem.
\newblock {\em Elemente der Mathematik}, 67:1--17, 2012.

\bibitem{indexformula}
O.~Knill.
\newblock An index formula for simple graphs \hfill.
\newblock {\\}http://arxiv.org/abs/1205.0306, 2012.

\bibitem{indexexpectation}
O.~Knill.
\newblock On index expectation and curvature for networks.
\newblock {\\}http://arxiv.org/abs/1202.4514, 2012.

\bibitem{knillmckeansinger}
O.~Knill.
\newblock {The McKean-Singer Formula in Graph Theory}.
\newblock {\\}http://arxiv.org/abs/1301.1408, 2012.

\bibitem{KnillILAS}
O.~Knill.
\newblock The {D}irac operator of a graph.
\newblock {{\\}http://arxiv.org/abs/1306.2166}, 2013.

\bibitem{IsospectralDirac2}
O.~Knill.
\newblock An integrable evolution equation in geometry.
\newblock {{\\} http://arxiv.org/abs/1306.0060}, 2013.

\bibitem{IsospectralDirac}
O.~Knill.
\newblock Isospectral deformations of the {D}irac operator.
\newblock {{\\}http://arxiv.org/abs/1306.5597}, 2013.

\bibitem{valuation}
O.~Knill.
\newblock Gauss-{B}onnet for multi-linear valuations.
\newblock {\\}http://arxiv.org/abs/1601.04533, 2016.

\bibitem{DehnSommerville}
O.~Knill.
\newblock On a {D}ehn-{S}ommerville functional for simplicial complexes.
\newblock {\\}https://arxiv.org/abs/1705.10439, 2017.

\bibitem{AmazingWorld}
O.~Knill.
\newblock The amazing world of simplicial complexes.
\newblock {{\\}https://arxiv.org/abs/1804.08211}, 2018.

\bibitem{dehnsommervillegaussbonnet}
O.~Knill.
\newblock Dehn-{S}ommerville from {G}auss-{B}onnet.
\newblock {\\}https://arxiv.org/abs/1905.04831, 2019.

\bibitem{KnillEnergy2020}
O.~Knill.
\newblock The energy of a simplicial complex.
\newblock {\em Linear Algebra and its Applications}, 600:96--129, 2020.

\bibitem{LusternikSchnirelmann2024}
O.~Knill.
\newblock Morse and lusternik-schnirelmann for graphs.
\newblock https://arxiv.org/abs/2405.19603, 2024.

\bibitem{QRKnill}
O.~Knill.
\newblock On symmetries of finite geometries.
\newblock https://arxiv.org/abs/2408.13973, 2024.

\bibitem{lakatos}
I.~Lakatos.
\newblock {\em Proofs and Refutations}.
\newblock Cambridge University Press, 1976.

\bibitem{Levitt1992}
N.~Levitt.
\newblock The {E}uler characteristic is the unique locally determined numerical
  homotopy invariant of finite complexes.
\newblock {\em Discrete Comput. Geom.}, 7:59--67, 1992.

\bibitem{Manifoldpage}
F.~Lutz.
\newblock The manifold page.
\newblock http://page.math.tu-berlin.de/~lutz/stellar/, Accessed, June 2015.

\bibitem{Lutz1999}
F.H. Lutz.
\newblock Triangulated manifolds with few vertices and vertex-transitive group
  actions.
\newblock Dissertation, Technische Universitaet Berlin, 1999.

\bibitem{McKeanSinger}
H.P. McKean and I.M. Singer.
\newblock Curvature and the eigenvalues of the {L}aplacian.
\newblock {\em J. Differential Geometry}, 1(1):43--69, 1967.

\bibitem{NarayanSaniee}
O.~Narayan and I.~Saniee.
\newblock Large-scale curvature of networks.
\newblock {\em Physical Review E}, 84, 2011.

\bibitem{Presnov1991}
E.~Presnov and V.~Isaeva.
\newblock Local and global aspects of biological morphogenesis.
\newblock {\em Speculations in Science and Technology}, 14:68, 1991.

\bibitem{Richeson}
D.S. Richeson.
\newblock {\em Euler's Gem}.
\newblock Princeton University Press, Princeton, NJ, 2008.
\newblock The polyhedron formula and the birth of topology.

\bibitem{Rosenthal}
A.~Rosenthal.
\newblock {\em Victor Eberhard}.
\newblock Leipzig/B.G. Teubner/Berlin, 1931.

\bibitem{Wolfram86}
S.~Wolfram.
\newblock {\em Theory and Applications of Cellular Automata}.
\newblock World Scientific, 1986.

\end{thebibliography}

\end{document}